\newtheorem{theorem}{Theorem}[section]
\newtheorem{lemma}[theorem]{Lemma}
\newtheorem{proposition}[theorem]{Proposition}
\newtheorem{fact}[theorem]{Fact}
\newtheorem{question}{Question}
\newtheorem*{maintheorem}{Theorem \ref{maintheorem}}
\theoremstyle{remark}
\newtheorem*{remark}{Remark}
\theoremstyle{definition}
\newtheorem{definition}[theorem]{Definition}
\newtheorem*{acknowledgements}{Acknowledgments}
\newcommand{\U}{\mathcal{U}}
\newcommand{\M}{\mathcal{M}}
\newcommand{\A}{\mathcal{A}}
\newcommand{\LL}{\mathbb{L}}
\newcommand{\non}[1]{\mathrm{non}(#1)}
\newcommand{\baire}{\omega^\omega}
\newcommand{\cantor}{{(2^{<\omega})}^{2^{<\omega}}}
\newcommand{\cantortree}{{2^{<\omega}}}
\title{A small Radon-Nikod\'{y}m compact space from a parametrized diamond}
\author{Arturo Mart\'{i}nez-Celis}
\address{Instytut Matematyczny, Uniwersytet Wroc\l awski, pl. Grunwaldzki 2, 50-384 Wrocław, Poland}
\email{\href{mailto:arturo.martinez-celis@math.uni.wroc.pl}{arturo.martinez-celis@math.uni.wroc.pl}}
\author{Adam Morawski}
\address{Instytut Matematyczny, Uniwersytet Wroc\l awski, pl. Grunwaldzki 2, 50-384 Wrocław, Poland}
\address{Univerzita Karlova, Matematicko-fyzik\'{a}ln\'{i} fakulta,
Ke Karlovu 3, 121 16 Praha 2, Czech Republic}
\email{\href{mailto:morawski@math.cas.cz}{morawski@math.cas.cz}}
\begin{document}

\begin{abstract}
A compact space $K$ is \emph{Radon-Nikod\'{y}m} if there is a lower semi-continuous metric fragmenting $K$. In this note, we show that, under $\diamondsuit (\non{\M})$, there is a Radon-Nikod\'{y}m compact space of weight $\aleph_1$ with a continuous image that is not Radon-Nikod\'{y}m, which partially answers a question posed in \cite{AvilesKoszmider}.
\end{abstract}

\maketitle
\section{Introduction.}
The study of Radon-Nikod\'{y}m compact spaces can be traced back to \cite{GSGReynov}, where they are introduced as a generalization of Eberlein compact spaces. These spaces were originally defined as weak compact subspaces $K \subseteq X^*$, where every Radon probability measure on $K$ is supported on a countable union of strongly compact sets. This definition has been studied extensively, and several connections have been found in Banach space theory and topology. A summary of some of these can be found in \cite{OverclassesFabian}.

Recall that metric $d$ \emph{fragments} $K$ if for every $\varepsilon>0$ and every non-empty $C \subseteq K$, there is an open set $U \subseteq K$ such that $U \cap C \neq \emptyset$ and $\mathrm{diam}(U\cap C) < \varepsilon$, and $d$ is \emph{lower semi-continuous} if it is lower semi-continuous as a function on $K \times K$. In \cite{NamiokaRNFragment}, Namioka proved that a compact space $K$ is Radon-Nikod\'{y}m if and only if there is a lower semi-continuous metric that fragments $K$. In the same work, the author asked whether the class of Radon-Nikod\'{y}m compact spaces is closed under continuous functions. This question will be the central point of our work.

In \cite[p. 104]{GenTopII}, a weaker notion of fragmentability is considered, attributed to Reznichenko: A compact space $K$ is \emph{quasi Radon-Nikod\'{y}m} (originally called strongly fragmentable) if there is a metric $d$ that fragments $K$, and for every different $x,y \in K$, there are open sets $U,V$ such that $x \in U, y \in V$ and $d(U,V)>0$. Clearly all Radon-Nikod\'{y}m compact spaces are quasi Radon-Nikod\'{y}m compact spaces, and, by a result of Namioka \cite{GenerRNSpaces}, the class of quasi Radon-Nikod\'{y}m compact spaces is closed under continuous images.

Several results relating quasi Radon-Nikod\'{y}m and Radon-Nikod\'{y}m compact spaces can be found in the literature: For example, Arvanitakis \cite{ArvanitakisRNComp} proved that totally disconnected quasi Radon-Nikod\'{y}m compact spaces are Radon-Nikod\'{y}m, and Avilés \cite{AvilesRNUnderB} proved that quasi Radon-Nikod\'{y}m spaces of weight less than $\mathfrak{b}$ are Radon-Nikod\'{y}m. The reader can consult \cite{SurveyProblem} for other related questions and results.

In 2013, Avilés and Koszmider answered Namioka's question in the negative.

\begin{theorem}[\cite{AvilesKoszmider}]
    There are compact spaces $\mathbb{L}_0, \mathbb{L}_1$ of weight $\mathfrak{c}$ and a continuous surjection $\pi: \mathbb{L}_0 \rightarrow  \mathbb{L}_1$ such that $\mathbb{L}_0$ is a Radon-Nikod\'{y}m compact space, and $\mathbb{L}_1$ is not.
\end{theorem}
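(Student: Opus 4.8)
The plan is to produce $\mathbb{L}_0$ as a \emph{zero-dimensional} compact space that is visibly Radon-Nikod\'{y}m — most naturally as the branch space of a tree, or as an adequate compact over a countable set — and to obtain $\mathbb{L}_1$ as a quotient $\pi:\mathbb{L}_0\to\mathbb{L}_1$ by an equivalence relation that glues together uncountably many carefully selected pairs of points, the choice of pairs being governed by a combinatorial object $\mathcal{A}$ of size $\mathfrak{c}$ (an almost disjoint family, or a coloring of pairs, living on a countable set). Since a continuous image of a Radon-Nikod\'{y}m compact space is quasi Radon-Nikod\'{y}m, $\mathbb{L}_1$ is automatically quasi Radon-Nikod\'{y}m, so the entire burden falls on two points: keeping $\mathbb{L}_0$ Radon-Nikod\'{y}m \emph{no matter which} $\mathcal{A}$ is used, and choosing $\mathcal{A}$ so that \emph{no} lower semicontinuous metric fragments $\mathbb{L}_1$. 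By Arvanitakis' theorem the target $\mathbb{L}_1$ cannot be totally disconnected, which tells us in advance that the quotient step must genuinely create connectedness — the picture to keep in mind is the standard surjection of $2^\omega$ onto $[0,1]$, performed simultaneously along an uncountable family of ``gaps''. (The failure must also appear only at weight $\ge\mathfrak{b}$, by Avil\'es' bound, so the recursion below really has to run past $\mathfrak{b}$; having weight $\mathfrak{c}$ is consistent with all of this since $\mathfrak{b}\le\mathfrak{c}$.)

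For the first point I would write down a lower semicontinuous fragmenting metric on $\mathbb{L}_0$ directly from its tree/adequate structure: a longest-common-initial-segment pseudometric on the branches, refined by a countable family of $\{0,1\}$-valued lower semicontinuous pseudometrics read off from the coordinates, and then verify Namioka's criterion by an induction along the tree. The essential feature is that this argument is uniform in $\mathcal{A}$: the pairs to be glued sit ``at infinity'' in $\mathbb{L}_0$ and do not obstruct the fragmentation of $\mathbb{L}_0$ itself. Checking that $\pi$ is a well-defined continuous surjection and that both spaces have weight $\mathfrak{c}$ is then routine.

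The core is showing that, for a suitable $\mathcal{A}$, $\mathbb{L}_1$ is not Radon-Nikod\'{y}m. Suppose $d$ is a lower semicontinuous metric fragmenting $\mathbb{L}_1$; using fragmentability, extract from $d$ a countable amount of ``information'' — a sequence of open sets together with lower semicontinuous pseudometrics taking few values — enough to recover the relevant behaviour of $d$, and observe that each such piece induces a Borel-type partition of the index set carrying $\mathcal{A}$. If $\mathcal{A}$ has been built to defeat all such partitions — a strong non-separation property in the spirit of a Luzin/Hausdorff gap, roughly that no countable partition ``disjointifies'' $\mathcal{A}$ — then one can locate two glued-together points of $\mathbb{L}_0$ whose common image in $\mathbb{L}_1$ lies at the top of a copy of $2^\omega$ that $d$ is unable to fragment, contradicting lower semicontinuity. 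Such an $\mathcal{A}$ is then produced by a transfinite recursion of length $\mathfrak{c}$: after bounding complexity there are only $\mathfrak{c}$ potential partitions to kill, at stage $\alpha$ one has listed the first $|\alpha|$ of them and adds a new member of $\mathcal{A}$ diagonalizing against the $\alpha$-th, with a scale or dominating family as bookkeeping. (In the $\aleph_1$-sized version of this note the recursion instead has length $\omega_1$ and is steered by $\diamondsuit(\non{\M})$, the guesses being coded as elements of the Polish space $\cantor$, which is why the relevant invariant is $\non{\M}$.)

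The step I expect to be the main obstacle is isolating the combinatorial property of $\mathcal{A}$ precisely enough that it is, at the same time, strong enough to kill \emph{every} lower semicontinuous fragmenting metric on $\mathbb{L}_1$ — which demands an exact description of how such metrics act on the glued part and of what ``bounded complexity'' of the auxiliary pseudometrics should mean — and weak enough to be realizable by the length-$\mathfrak{c}$ recursion (respectively the $\diamondsuit(\non{\M})$-guided recursion of length $\omega_1$). Reconciling these two counting problems, and arranging the gluing so that it neither accidentally makes $\mathbb{L}_1$ Radon-Nikod\'{y}m nor destroys the Radon-Nikod\'{y}m property of $\mathbb{L}_0$, is the delicate heart of the argument; everything else is bookkeeping.
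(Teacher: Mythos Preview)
The paper does not prove this theorem itself---it is quoted from \cite{AvilesKoszmider}---but Section~2 spells out the Avil\'es--Koszmider recipe in enough detail to compare against. Your high-level instincts are right on several counts: $\mathbb{L}_0$ is zero-dimensional and is shown to be Radon--Nikod\'ym by checking quasi-RN and invoking Arvanitakis; the map $\pi$ is precisely the standard surjection $q:2^\omega\to[0,1]$ applied on an uncountable set of coordinates, which is how connectedness enters $\mathbb{L}_1$; the combinatorial input is an almost disjoint family $\{C_b:b\in B\}$ on a countable set $A=\dot\bigcup_{t\in 2^{<\omega}}A_t$; and in \cite{AvilesKoszmider} this family is produced by a recursion of length $\mathfrak{c}$.

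Where your sketch diverges is the specific architecture, and this is not cosmetic. The spaces are not branch spaces of a tree or adequate compacts over a countable set: one first builds a ``basic space'' $K=A\cup B\cup\{c\}$ (a one-point compactification determined by the AD family), then replaces each isolated point of $A$ by a copy of $2^\omega$ to get $L$, and finally realises $\mathbb{L}_0\subseteq L\times(2^\omega)^B$ and $\mathbb{L}_1\subseteq L\times[0,1]^B$ as graph-closure-type subspaces of maps $g_b$, $h_b=q\circ g_b$. The combinatorial requirement is also sharper than ``no countable partition disjointifies $\mathcal A$'': one needs a \emph{predictor} $\Psi:B\to\omega^{2^{<\omega}}$ (in the paper's basic$^*$ variant, $D:B\to(2^{<\omega})^{2^{<\omega}}$) such that for every family of countable covers $A_t=\bigcup_m X^t_m$ there is $b\in B$ with $C_b\cap X^t_{\Psi(b)(t)}$ infinite for all $t$ simultaneously. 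The non-RN argument then runs concretely: from a putative lsc fragmenting metric $\delta$ one extracts a single function $s:A\to 2^{<\omega}$ (not a countable scheme of partitions), applies the predictor property to find $b$, and uses lower semicontinuity along sequences $a_n\to b$ together with a telescoping estimate over the level $2^n$ to force $\delta(b\oplus 0,\,b\oplus 1)=0$. Your ``extract countable information / Borel-type partition / Luzin-gap'' picture is in the right spirit but, as stated, would not isolate the exact condition that makes the recursion and the non-RN argument fit together.
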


In this work, Avilés and Koszmider asked about the possible weights of such spaces. In particular, they asked if $\mathfrak{b}$ is the smallest weight of those spaces, since, by Avilés' theorem of \cite{AvilesRNUnderB}, the weight of such spaces cannot be smaller than $\mathfrak{b}$, thus, under $\mathfrak{b} = \mathfrak{c}$, there are no such spaces of weight less than $\mathfrak{c}$. The purpose of this note is to give a construction of such spaces of weight $\aleph_1$, under the guessing principle $\diamondsuit(\non{\mathcal{M}})$, which is consistent with the negation of the continuum hypothesis.

The guessing principles that we use in this work can be described using Borel relational systems. These relational systems were originally considered by Vojt\'a\v{s} in \cite{InvariantsGaloisTukey}, and have been used as a tool to describe and compare many cardinal characteristics (for example, see \cite{BlassCardinalCharacteristics}): A \emph{Borel relational system} is a triple $\mathbf{A} = \langle A_-,A_+,A\rangle$ where $A_-$ and $A_+$ are Borel subsets of a Polish space and $A \subseteq A_- \times A_+$ is a Borel relation. The \emph{norm} of a relational system $\mathbf{A} = \langle A_-,A_+,A\rangle$ is the smallest cardinality of a set $D \subseteq A_+$ such that for every $a \in A_-$ there is a $d \in D$ such that $\langle a,d \rangle \in A$. Many of the most well-known cardinal invariants of the continuum can be stated as the norm of a relational system: For example, $\non{\M}$ can be stated as the norm of $\mathbf{M} = \langle \M, \baire, \not\ni \rangle$, where $\M$ is the collection of all $F_\sigma$ meager sets of $\baire$, and $\mathfrak{b}$ can be stated as the norm of $\mathbf{B} = \langle \baire, \baire, \not\leq^* \rangle$. The reader can easily verify that both $\mathbf{M}$ and $\mathbf{B}$ are Borel relational systems.

In \cite{ParametrizedDiamonds}, the authors introduced a plethora of combinatorial principles similar to $\diamondsuit$, called parametrized diamond principles: Given a Borel relational system $\mathbf{A} = \langle A_-,A_+,A \rangle$, the principle $\diamondsuit (\mathbf{A} )$ is the statement that for every Borel $F : 2^{<\omega_1} \rightarrow \A_-$ there is a function $g : \omega_1 \rightarrow A_+$ such that, for any $\gamma \in 2^{\omega_1}$, the set $\{ \alpha \in \omega_1 : \langle g(\alpha), F(\gamma \restriction \alpha)\rangle \in A \}$ is stationary in $\omega_1$. One can easily verify that $\diamondsuit (\mathbf{A})$ is a consequence of $\diamondsuit$ and that $\diamondsuit (\mathbf{A})$ implies that the norm of $\mathbf{A}$ is at most $\aleph_1$. Here, a function $F$ with domain $2^{<\omega_1}$ is \emph{Borel} if for each $\alpha < \omega_1$, the restrictions $F \restriction {2^\alpha}$ are Borel functions between Polish spaces.

These principles have been used extensively, not only for set-theoretical purposes, but also in topology. Some applications of different kinds of parametrized diamonds can be found in \cite{ScarStone}, \cite{irresolvablespace}, \cite{DowkerSpace}, \cite{CoveringProperties}.

In our work, we will be using mostly $\diamondsuit(\mathbf{M})$, which is known as $\diamondsuit (\non{\M})$, and we will be using it in the following form:
\begin{definition}
    The \emph{diamond of }$\non{\M}$, denoted by $\diamondsuit (\non{\M})$ is the following combinatorial principle:

    For every Borel $F: 2^{<\omega_1} \rightarrow \M$ there is a sequence $\Delta : \omega_1 \rightarrow \cantor \times \omega^{\cantortree \times \omega}$ such that for any $\gamma \in 2^{\omega_1}$, the set $\{ \alpha \in \omega_1 : \Delta(\alpha) \notin F(\gamma \restriction \alpha) \}$ is stationary in $\omega_1$.
\end{definition}

The relationship between different parametrized diamonds and their consistency is widely discussed in \cite{ParametrizedDiamonds}. In particular, $\diamondsuit(\non{\mathcal{M}})$ follows from the classical $\diamondsuit$, but it also holds in Sacks' or Millers' model (see \cite[Chapter 7]{BarJu}), as both Sacks and Miller forcing fall into the scope of \cite[Theorem 6.6.]{ParametrizedDiamonds}. This principle is also consistent with arbitrarily large continuum, by \cite[Theorem 6.1.]{ParametrizedDiamonds} and the fact that the classical forcing to add $\kappa$ Cohen reals $\mathbb{C}_\kappa$ is forcing equivalent to the iteration $\mathbb{C}_\kappa \ast \mathbb{C}_{\omega_1}$.

The purpose of this note is to prove the following theorem.

\begin{maintheorem}
Under $\diamondsuit (\non{\M})$ there is a Radon-Nikod\'{y}m compact space of weight $\aleph_1$, and a continuous function whose image is not Radon-Nikod\'{y}m.
\end{maintheorem}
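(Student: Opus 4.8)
The plan is to construct $K$ and $L$ simultaneously, by a transfinite recursion of length $\omega_1$ in which we build increasing chains $\langle K_\alpha : \alpha<\omega_1\rangle$ and $\langle L_\alpha : \alpha<\omega_1\rangle$ of metrizable compacta together with a coherent chain of continuous surjections $q_\alpha\colon K_\alpha\to L_\alpha$, and then set $K:=\overline{\bigcup_{\alpha<\omega_1}K_\alpha}$ and $L:=\overline{\bigcup_{\alpha<\omega_1}L_\alpha}$ inside suitable products, so that $K$ and $L$ have weight at most $\aleph_1$, each $K_\alpha,L_\alpha$ is a closed subspace of $K$ resp.\ $L$, and the common extension $\pi\colon K\to L$ of the $q_\alpha$ is a continuous surjection. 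Alongside we maintain a coherent system of finite-range (say $(\{2^{-n}:n<\omega\}\cup\{0\})$-valued) pseudometrics on the $K_\alpha$ whose union $d$ is a lower semicontinuous metric fragmenting $K$; by Namioka's theorem \cite{NamiokaRNFragment} this makes $K$ Radon-Nikod\'{y}m, and, crucially, it does so independently of how the $L$-side of the recursion proceeds. There the goal is the opposite: $L$ must fail to be Radon-Nikod\'{y}m. Since $L$ is a continuous image of the Radon-Nikod\'{y}m space $K$ it is automatically quasi Radon-Nikod\'{y}m, so what we are really building is a quasi Radon-Nikod\'{y}m compact of weight $\aleph_1$ that is not Radon-Nikod\'{y}m \emph{together with} an explicit Radon-Nikod\'{y}m preimage; the use of $\diamondsuit(\non{\M})$, which forces $\mathfrak b=\aleph_1$, is exactly what removes the obstruction of Avil\'{e}s' theorem \cite{AvilesRNUnderB} that no such $L$ can have weight below $\mathfrak b$. (Note also that by Arvanitakis' theorem \cite{ArvanitakisRNComp} such an $L$ cannot be totally disconnected, so the $L_\alpha$ must carry nontrivial connected structure.)

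The guessing principle is used to anticipate lower semicontinuous metrics on $L$. We first fix a Borel coding that reads off, from $\gamma\restriction\alpha\in 2^\alpha$, both the state of the recursion at stage $\alpha$ (the finite trees or Cantor schemes describing $K_\alpha$, $L_\alpha$, $q_\alpha$ and the pseudometric maintained so far) and an auxiliary lower semicontinuous pseudometric $e$ on $L_\alpha$. Writing $\mathbb P:=\cantor\times\omega^{\cantortree\times\omega}$ for the parameter space, we let $F(\gamma\restriction\alpha)\subseteq\mathbb P$ be the set of parameters $p$ that are \emph{bad for} $e$: those $p$ for which, when the stage-$\alpha$ extension $(K_{\alpha+1},L_{\alpha+1},q_{\alpha+1})$ is built from $p$ by the canonical rule of the recursion, the newly added piece $C_\alpha\subseteq L_{\alpha+1}$ fails to be a \emph{spoiler} for $e$ --- that is, fails to be a closed subspace of (what will become) $L$ on which \emph{no} lower semicontinuous metric of $L$ extending $e$ can witness fragmentability at a prescribed scale $\varepsilon_\alpha$. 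Two things must then be established. First, $F$ is Borel in the sense required by $\diamondsuit(\non{\M})$: this is a routine check once the coding is fixed, since $F(\gamma\restriction\alpha)$ is obtained from $\gamma\restriction\alpha$ by decoding and then evaluating a fixed Borel predicate of $p$. Second --- the crux --- for every admissible $e$ the bad set $F(\gamma\restriction\alpha)$ is meager in $\mathbb P$: a comeager set of parameters attaches the new metrizable layer in a way that is ``generic'' relative to the prescribed data, and this genericity is what forces $C_\alpha$ to spoil $e$. It is precisely this Cohen-genericity that makes the weaker, $\non{\M}$-flavoured diamond sufficient (hence the result compatible with arbitrarily large continuum), where a first attempt might have seemed to require full $\diamondsuit$; cf.\ \cite{ParametrizedDiamonds}.

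Granting the two lemmas, we apply $\diamondsuit(\non{\M})$ to $F$ to obtain $\Delta\colon\omega_1\to\mathbb P$, and we run the recursion with $p:=\Delta(\alpha)$ as the stage-$\alpha$ parameter. To see that the resulting $L$ is not Radon-Nikod\'{y}m, suppose towards a contradiction that $\rho$ is a lower semicontinuous metric on $L$ that fragments $L$. Using that $L=\overline{\bigcup_\alpha L_\alpha}$, that the restriction of a lower semicontinuous metric to a subspace is lower semicontinuous, and a reflection argument, we can choose $\gamma\in 2^{\omega_1}$ so that for a club of $\alpha$ the string $\gamma\restriction\alpha$ decodes correctly to the true stage-$\alpha$ state together with $e=\rho\restriction(L_\alpha\times L_\alpha)$. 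By $\diamondsuit(\non{\M})$ there are stationarily many $\alpha$ with $\Delta(\alpha)\notin F(\gamma\restriction\alpha)$; fix such an $\alpha$ also in the club. Then, by the definition of $F$ and of the canonical extension rule, the set $C_\alpha$ actually built at stage $\alpha$ is a spoiler for $e=\rho\restriction L_\alpha$: it is a closed subspace of $L$ on which no lower semicontinuous metric of $L$ extending $e$ can witness fragmentability at scale $\varepsilon_\alpha$ --- in particular $\rho$ cannot. But fragmentability passes to subspaces, so $\rho$ does fragment $C_\alpha$; contradiction. Hence $L$ is not Radon-Nikod\'{y}m, while $d$ witnesses that $K$ is, and $\pi\colon K\to L$ is the desired continuous surjection, of weight $\aleph_1$.

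The main obstacle --- the technical core, to be adapted out of the construction of \cite{AvilesKoszmider} --- is the design of the template, i.e.\ of the metrizable compacta $K_\alpha$, $L_\alpha$, the maps $q_\alpha$, and above all the canonical extension rule $p\mapsto(K_{\alpha+1},L_{\alpha+1},q_{\alpha+1})$, so that \emph{all} of the following hold at once: the coherent pseudometric $d$ remains a lower semicontinuous metric fragmenting $K$; for every admissible lower semicontinuous pseudometric $e$ on $L_\alpha$, a comeager set of parameters yields a genuine spoiler $C_\alpha$ for $e$ (and, cumulatively over the recursion, enough spoilers that no lower semicontinuous metric on all of $L$ escapes); and the assignment $\gamma\restriction\alpha\mapsto F(\gamma\restriction\alpha)$ is Borel with meager values. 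Reconciling these constraints is what pins down the particular shape of the parameter space $\mathbb P=\cantor\times\omega^{\cantortree\times\omega}$: its first coordinate encodes the combinatorial skeleton and attaching map of the new layer, and its second coordinate the quantitative data --- the ``ladder'' along which new points approximate old ones, together with the pseudometric labels --- that the Baire-category argument randomizes.
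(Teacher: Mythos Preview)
Your proposal is a plan, not a proof. You explicitly identify the technical core --- ``the design of the template, i.e.\ of the metrizable compacta $K_\alpha,L_\alpha$, the maps $q_\alpha$, and above all the canonical extension rule'' --- and then do not supply it. Everything hinges on two unproved claims: that for every admissible lower semicontinuous pseudometric $e$ on $L_\alpha$ the set of ``bad'' parameters is meager, and that a ``good'' parameter really yields a spoiler $C_\alpha$ that no extension of $e$ can fragment. Without the concrete template (what $K_{\alpha+1},L_{\alpha+1}$ look like and how $p\in\cantor\times\omega^{\cantortree\times\omega}$ drives the extension), neither claim can even be formulated precisely, let alone verified; and the paper's experience shows that this is exactly where all the work lies. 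A further worry is that you want $C_\alpha$ to spoil every LSC metric on $L$ extending $e$, but at stage $\alpha$ you only see $L_\alpha$; arranging a single closed set whose failure of fragmentation is robust under all future extensions is delicate, and your sketch gives no mechanism for it.

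The paper proceeds quite differently and more modularly. It first isolates a purely combinatorial object, a \emph{basic$^*$} space $K=A\cup B\cup\{c\}$: a one-point compactification of a locally compact space determined by an almost disjoint family $\langle C_b:b\in B\rangle$ together with a function $D:B\to\cantor$ satisfying a certain guessing condition (condition (5)). From any such $K$ there is a fixed recipe (adapted from Avil\'es--Koszmider) producing spaces $\LL_0\subseteq L\times(2^\omega)^B$ and $\LL_1\subseteq L\times[0,1]^B$ and a surjection $\pi:\LL_0\to\LL_1$; the Radon--Nikod\'ym property of $\LL_0$ is obtained for free from zero-dimensionality via Arvanitakis' theorem, with no need to maintain an explicit fragmenting metric along a recursion, and the failure of Radon--Nikod\'ym for $\LL_1$ is derived from condition~(5) by a direct diameter estimate. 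The diamond $\diamondsuit(\non{\M})$ is used \emph{only} to construct the basic$^*$ space of size $\aleph_1$: one defines a Borel $F$ whose values are explicit meager sets (checked by hand), applies the diamond to get the guessing sequence, and recursively builds the almost disjoint family and $D$. Thus the paper avoids anticipating metrics directly; it anticipates the combinatorial data that any fragmenting metric would produce, which is what makes the meagerness computation tractable.
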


Therefore, it is consistent to have a Radon-Nikod\'{y}m compact space of small weight, with a continuous image that is not Radon-Nikod\'{y}m, for example, in any of the models mentioned above.

This paper is organized in the following way: Section 2 is devoted to briefly introduce the notion of a basic$^*$ space, and some results that were originally proven in \cite{AvilesKoszmider} which will be needed to explain why the existence of a basic$^*$ space implies our main theorem. Section 3 is the core of our construction and is completely dedicated to construct a basic$^*$ space from $\diamondsuit (\non{\M})$, finishing in a short discussion about the limitations of this method and some possible improvements.

Our notation is standard and mostly follows \cite{TopicsBanach} and \cite{STJech}: If $X$ is a set, then $|X|$ denotes the cardinality of $X$. The ordinal $\omega = \{ 0,1,2, \ldots \}$ is the first infinite cardinal, $\omega_1$ is the first uncountable ordinal, and $\mathfrak{c}$ is the cardinality of $\mathbb{R}$. If $X,Y$ are sets, then $X^Y$ is the collection of all functions $f: Y \rightarrow X$. In particular, if $n\in \omega$, then $2^n$ is the collection of all functions $s : n \rightarrow 2 =\{ 0,1\}$, and $2^\omega = \bigcup_{n \in \omega} 2^n$. If $f$ is a function, and $A \subseteq \mathrm{dom}(f)$, then $f \restriction A$ is the usual restriction of $f$ to the set $A$. If $X$ is a topological space, then $w(X)$ is the \emph{weight} of $X$, that is, the smallest cardinality of a basis for the topology of $X$. If $s,t \in \cantortree$, then $s ^\frown t$ denotes the usual concatenation of sequences. The quantifiers $\forall^\infty n \in \omega$ and $\exists^\infty n \in \omega$ denote the quantifiers representing "for every, but finitely many $n \in \omega$", and "there is an infinite amount of $n\in \omega$", respectively.

\section{Constructing Radon-Nikod\'{y}m compact spaces from a basic* space.}

In \cite{AvilesKoszmider}, the authors construct a Radon-Nikod\'{y}m space $\mathbb{L}_0$ and a continuous function $\pi$ such that $\pi[\mathbb{L}_0] = \mathbb{L}_1$ is not Radon-Nikod\'{y}m. In that work, both $\mathbb{L}_0$ and $\mathbb{L}_1$ are constructed following a recipe that requires what they call a basic space as an ingredient. It turns out that the existence of such basic space of weight $\aleph_1$ implies the continuum hypothesis. Here, we will follow the same recipe, modifying the notion of basic space, allowing us to work without the need of the continuum hypothesis. The main purpose of this section is to show that this modification is tame enough to allow us to follow the recipe with minimal modifications, and to see that the spaces have the proper weight.

\begin{definition}
    A topological, Hausdorff space $K = A \cup B \cup \{c\}$ is \emph{pre-basic} if
    \begin{enumerate}
        \item $A = \dot{\bigcup}_{t \in \cantortree} A_t$ and each $A_t$ and $B$ are infinite sets,
        \item $A$ consists of isolated points,
        \item \label{c-set}For each $b \in B$ there is a countable set $C_b \subseteq A$ such that $\{ (C_b\setminus F) \cup \{b\}: F \text{ finite}\}$ is a local basis of $b$,
        \item $K$ is the one-point compactification of $A \cup B$.
    \end{enumerate}
Additionally, if 
\begin{enumerate}[resume]
    \item \label{basic}There is a function $D : B \rightarrow \cantor$ such that for every function $s: A \rightarrow \cantortree$ there is $b\in B$ such that there are infinitely many $n \in \omega$ such that for every $t \in 2^n$ the set $\{ a \in A_t \cap C_b : D(b)(t) = s(a) \}$ is infinite.
\end{enumerate}
then $K$ is a \emph{basic}$^*$ space.
\end{definition}

Clearly, the first three conditions imply that the space is locally compact, so condition (4) is attainable. In fact, we have the following easy properties:

\begin{fact}
    Assume that $A \cup B$ satisfies conditions $(1)$,$(2)$ and $(3)$. Then,
    \begin{itemize}
        \item for every $b,d \in B$ such that $b \neq d$, we have that $C_b \cap C_d$ is finite,
        \item for all $b \in B$, $C_b \cup \{b\}$ is compact. Moreover, $\overline{C_b}=C_b\cup\{b\}$ is closed and open in both $A\cup B$ and $K$.
    \end{itemize}
\end{fact}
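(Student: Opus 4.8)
The plan is to read everything off the explicit local bases provided by condition $(3)$, together with the Hausdorff assumption. First I would record the elementary remark (already flagged in the text) that $(1)$--$(3)$ make $A\cup B$ locally compact Hausdorff, so that forming the one-point compactification $K$ makes sense; throughout I use that points of $A$ are isolated by $(2)$ and that $C_b\cup\{b\}$ is, for $F=\emptyset$, itself a basic neighbourhood of $b$ by $(3)$.

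For the first bullet, I would argue by contradiction: if $C_b\cap C_d$ were infinite for some $b\neq d$ in $B$, pick disjoint open $U\ni b$ and $V\ni d$ by Hausdorffness, shrink them using $(3)$ so that $U\supseteq (C_b\setminus F)\cup\{b\}$ and $V\supseteq (C_d\setminus G)\cup\{d\}$ for some finite $F,G\subseteq A$, and then observe that the infinite set $(C_b\cap C_d)\setminus(F\cup G)$ would lie in $U\cap V=\emptyset$, a contradiction.

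For the second bullet I would first prove $C_b\cup\{b\}$ compact: from an open cover take a member $O\ni b$, use $(3)$ to get $O\supseteq (C_b\setminus F)\cup\{b\}$ with $F$ finite, and cover the finitely many leftover points of $F\cap C_b$ by finitely many further members of the cover. Compactness in the Hausdorff spaces $A\cup B$ and $K$ then gives that $C_b\cup\{b\}$ is closed in both. For openness: the points of $C_b$ are isolated by $(2)$, and $C_b\cup\{b\}$ is a neighbourhood of $b$ by $(3)$, so $C_b\cup\{b\}$ is open in $A\cup B$; since $A\cup B$ is open in its one-point compactification $K$, the set $C_b\cup\{b\}$ is open in $K$ as well. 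Finally $\overline{C_b}=C_b\cup\{b\}$ because the right-hand side is closed and contains $C_b$, while every basic neighbourhood $(C_b\setminus F)\cup\{b\}$ of $b$ still meets the infinite set $C_b$, so $b$ is a limit point of $C_b$.

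The proof is essentially bookkeeping; the only points I anticipate needing care are ensuring that open subsets of $A\cup B$ remain open in the one-point compactification (so the clopenness transfers to $K$), and, for the last equality, invoking that each $C_b$ is infinite, which is part of the standing hypotheses on a pre-basic space. There is no real obstacle here: this Fact merely fixes the basic topology of the sets $C_b\cup\{b\}$ that the construction in Section 3 will use.
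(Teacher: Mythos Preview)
Your proposal is correct and follows exactly the route the paper indicates: the first bullet is derived from Hausdorffness together with condition~(3), and the second from conditions~(2) and~(3). The paper's own proof is a two-line sketch citing precisely these ingredients, so you have simply written out the details the authors left to the reader.
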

\begin{proof}
    The first point follows by the fact that the space is Hausdorff and condition $(3)$. The second one follows easily from $(2)$ and $(3)$.
\end{proof}

%\begin{fact}
 %   If $A\cup B$ satisfies conditions (1-3) then $C_b \cup \{b\}$ is compact for each $b\in B$. In particular, $A\cup B$ is locally compact and $\overline{C_b}=C_b\cup\{b\}$ both in $A\cup B$ and $K$.
%\end{fact}
%\begin{proof}
 %   Fix $b\in B$ and take any $d\neq b\in B$. Notice that $C_d\setminus C_b \cup \{d\}$ is open and hence $C_b\cup\{b\}$ is closed in $A\cup B$. Since any open cover of $C_b\cup\{b\}$ in $A\cup B$ contains $C_b\setminus F\cup\{b\}$ for some finite set $F$, we know that $C_b\cup\{b\}$ is compact in $A\cup B$ and therefore it is closed in $K$. Clearly, $C_b$ is not closed in $A\cup B$ nor in $K$.
%\end{proof}

%And therefore, the one-point compactification is well-defined. In fact,
%\begin{fact}
 %   Given pairwise disjoint, infinite $A$ and $B$, a collection of countable sets $\{C_b:b\in B\}$ with $C_b\subseteq A$ defines a pre-basic space if and only if $\{C_b:b\in B\}$ is almost disjoint.
%\end{fact}
%\begin{proof}unnecessary proof?
%Fix $b\neq d\in B$. The Hausdorff condition for $b,d\in K$ is equivalent to $C_b\cap C_d$ being finite.
%\end{proof}
An easy consequence of this is that $K$ is a zero-dimensional space. The weight of these spaces is also easy to calculate.

\begin{lemma}
    If $K$ is a pre-basic space, then $w(K)=|K|$. \label{lemmaweightK}
\end{lemma}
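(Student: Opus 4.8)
The inequality $w(K) \geq |K|$ requires a little argument; the reverse inequality $w(K) \leq |K|$ is essentially automatic. Let me think about what needs to happen. For $w(K) \leq |K|$: since $K$ is Hausdorff and compact, hence normal, and more to the point the structure gives an explicit basis. The isolated points of $A$ contribute $|A|$ many singletons. Each $b \in B$ has the local basis $\{(C_b \setminus F) \cup \{b\} : F \text{ finite}\}$ from condition (3); there are $|B|$ points $b$, each with countably many basic neighbourhoods (finite subsets of the countable set $C_b$), so this contributes $|B| \cdot \aleph_0 = |B|$ sets when $B$ is infinite. Finally the point $c$: since $K$ is the one-point compactification of $A \cup B$, the neighbourhoods of $c$ are complements of compact subsets of $A \cup B$. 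Every compact subset of $A \cup B$ is contained in a finite union $F \cup \overline{C_{b_1}} \cup \dots \cup \overline{C_{b_k}}$ with $F \subseteq A$ finite (using the Fact: each $\overline{C_b} = C_b \cup \{b\}$ is clopen, the $b$'s not among $b_1,\dots,b_k$ are isolated from the rest once we remove those clopen pieces, and isolated points of $A$ can only accumulate inside the $\overline{C_b}$'s). Hence a cofinal family of neighbourhoods of $c$ is indexed by finite subsets of $A \cup B$, contributing $|A \cup B| = |K|$ many. Altogether we exhibit a basis of size $|K|$, so $w(K) \leq |K|$.

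For $w(K) \geq |K|$: the space has $|A|$ isolated points, and in any compact (indeed, any) space every isolated point must belong to the basis (a singleton open set is not a union of other open sets unless it is itself basic), so $w(K) \geq |A|$. Also $|B| \leq |A|$: the sets $C_b$ are countable subsets of the countably-many pieces... wait, that is not quite it — rather, by the Fact the sets $C_b$ are pairwise almost disjoint infinite subsets of the countable union $A = \dot\bigcup_t A_t$; an almost disjoint family of infinite subsets of a set of size $\kappa$ has size at most $2^\kappa$, which is too weak. Better: each $b$ is a non-isolated point and $\overline{C_b} = C_b \cup \{b\}$, so $b \in \overline{C_b \cap (\text{some } A_t)}$ forces $b$ to be a limit of a subset of $A$; distinct $b$'s need not be separated by this alone. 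Instead use weight directly: for each $b \in B$ pick a basic open $U_b$ with $b \in U_b$ and (since $K$ is Hausdorff and $c \neq b$) $c \notin U_b$, so $U_b \subseteq A \cup B$ and in fact $U_b \subseteq \overline{C_b}$ for all large enough basic sets; the map $b \mapsto U_b$ need not be injective a priori, but if $U_b = U_{b'}$ then $b, b' \in U_b \subseteq \overline{C_b} \cap \overline{C_{b'}} = C_b \cap C_{b'} \cup \{b\} \cup \{b'\}$ — hmm, since $C_b \cap C_{b'}$ is finite and $\overline{C_b}$ is clopen, $b' \in \overline{C_b}$ would give $b' \in C_b$, contradicting that $b'$ is non-isolated while points of $C_b \subseteq A$ are isolated. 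So $b \mapsto U_b$ is injective into the basis, giving $w(K) \geq |B|$. Since $|K| = |A| + |B| + 1 = \max(|A|,|B|)$ (both infinite), we get $w(K) \geq |K|$, completing the proof.

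The main obstacle is bookkeeping around the point $c$ in the $w(K)\le |K|$ direction: one must verify that the compact subsets of $A\cup B$ are (up to finite modification) exactly finite unions of the clopen pieces $\overline{C_b}$ together with finite subsets of $A$, so that their complements form a basis at $c$ of the right cardinality. This uses the Fact that each $\overline{C_b}$ is clopen together with the observation that $A\cup B$ is $\sigma$-compact and locally compact with the $\overline{C_b}$'s and singletons $\{a\}$ forming a clopen partition-like cover; a compact set meets only finitely many members of a locally finite clopen refinement. Everything else is a routine counting argument on countable families.
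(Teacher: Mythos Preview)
Your argument is correct and, for the upper bound $w(K)\le|K|$, it matches the paper's proof exactly: you identify the same cofinal family of neighbourhoods of $c$ by observing that a compact subset of $A\cup B$ can meet $B$ only finitely often and is therefore contained in a set of the form $A_0\cup\bigcup_{b\in B_0}\overline{C_b}$ with $A_0,B_0$ finite. For the lower bound the paper takes a slightly shorter route than your explicit injections $a\mapsto\{a\}$ and $b\mapsto U_b$: it simply notes that $A$ and $B$ are each discrete in their subspace topologies (for $B$, because $\overline{C_b}\cap B=\{b\}$), so $w(K)\ge\max\{w(A),w(B)\}=\max\{|A|,|B|\}=|K|$.
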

\begin{proof}
     Since both A and B are discrete in their respective subspace topologies, we get
     $$w(K)\geqslant \max\{w(A),w(B)\} = \max\{|A|,|B|\}= |K|.$$
     
     For the other inequality, we only have to show that $A\cup B$ has a basis of size $|K|$. Notice that any set that contains an infinite subset of $B$ is not compact in $A\cup B$ and therefore any compact set in $A\cup B$ is contained in a set of the form $B_0\cup\{C_b:b\in B_0\}\cup A_0$ for some finite $B_0\subseteq B,A_0\subseteq A$. Thus $\{K\setminus (B_0\cup\{C_b:b\in B_0\}\cup A_0):B_0\subseteq B,A_0\subseteq A\text{ finite}\}$ is a basis of neighborhoods $c$ of size $|K|$ (the basis for other points is given by $(2)$ and $(3)$).
\end{proof}

The only difference between a basic* space, and the notion of basic space defined in \cite[Section 3]{AvilesKoszmider}, is the condition (\ref{basic}). In that work, they instead ask for the following property:

\noindent \cite{AvilesKoszmider} There exists a function $\Psi: B \rightarrow \omega^{\cantortree}$ such that, given any family $\{ X_m^t : m \in \omega, t \in \cantortree \}$ of subsets such that, for every $t \in \cantortree$ $A_t = \bigcup_{m \in \omega} X^t_m$, there is an $\alpha \in B$ such that, for all $t \in \cantortree$, $C_\alpha \cap X^t_{\Psi(\alpha)(t)}$ is infinite.

Observe that $\Psi$ necessarily has to be onto: If $f \in \omega^{\cantortree}$, and if we consider the family $\{ X(f)^t_m : t \in \cantortree, m \in \omega \}$ where $X(f)^t_{f(t)} = A_t$, and $X(f)^t_m = \emptyset$ for every other $m \neq f(t)$, then there must be $b \in B$ such that, for all $t \in \cantortree$, $C_b \cap X(f)^t_{\Psi(b)(t)}$ is infinite. But then $C_b \subseteq X(f)^t_{\Psi(b)(t)}$ and therefore $f(t) = \Psi(b)(t)$ for all $t \in \cantortree$. This means that it is impossible to construct basic spaces of cardinality smaller than $\mathfrak{c}$.

The next step of the recipe is described as follows: Given $K=A\cup B\cup \{c\}$ a pre-basic space, let 
\[L = (A\times 2^\omega)\cup B\cup\{c\}.\]
To define the topology on $L$, pick $\U$ a basis of $K$. Then, the collection
\[
\{\{a\}\times [s]: a\in A, s\in \cantortree\} \cup \{ ((U\cap A)\times 2^\omega) \cup U\setminus A : U\in \U\}
\]
forms a basis of the topology of $L$. 

\begin{lemma} \label{lemmaweightL}
   $L$ is a zero-dimensional compact space and $w(L) = |K|.$
\end{lemma}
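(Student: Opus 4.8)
The plan is to verify the two assertions of Lemma~\ref{lemmaweightL} separately, treating $L = (A\times 2^\omega)\cup B\cup\{c\}$ with the topology generated by the given basis, and using the already-established facts about $K$ (in particular Lemma~\ref{lemmaweightK}, so $w(K)=|K|$, and the Fact describing compact subsets and the closed-open sets $\overline{C_b}=C_b\cup\{b\}$).

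First I would check that the displayed collection really is a basis for a topology on $L$, i.e. it covers $L$ and the intersection of two basic sets contains a basic set around each of its points. The sets $\{a\}\times[s]$ are mutually compatible in the obvious way ($[s]\cap[s']$ is $[s]$, $[s']$, or empty), each point of $A\times 2^\omega$ lies in $\{a\}\times[s]$ for any $s\subseteq x$, each point of $B\cup\{c\}$ lies in $((U\cap A)\times 2^\omega)\cup(U\setminus A)$ for any $U\in\U$ containing it, and the intersection of two sets of the second type comes from $U\cap U'$ (using that $\U$ is a basis of $K$), while the intersection of a first-type and a second-type set is again of the first type or empty. Next, for zero-dimensionality I would note that each $\{a\}\times[s]$ is clopen in $L$ (its complement is the union of the other $\{a\}\times[s']$ at the same level together with a second-type set), and each second-type set $((U\cap A)\times 2^\omega)\cup(U\setminus A)$ is clopen whenever $U$ is clopen in $K$ — and $K$ is zero-dimensional, so we may take $\U$ to consist of clopen sets. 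Hence $L$ has a basis of clopen sets.

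For compactness of $L$: given an open cover by basic sets, the point $c$ lies in some $((U\cap A)\times 2^\omega)\cup(U\setminus A)$ with $U\in\U$; by the Fact, $K\setminus U$ is contained in a compact set of the form $B_0\cup\bigcup_{b\in B_0}C_b\cup A_0$ with $B_0,A_0$ finite, so outside our chosen basic set $L$ only meets finitely many fibers $\{a\}\times 2^\omega$ (for $a\in A_0$), finitely many sets $C_b\times 2^\omega$ (for $b\in B_0$, each of which is homeomorphic to the compact space $(C_b\cup\{b\})\times 2^\omega$, using $\overline{C_b}=C_b\cup\{b\}$ clopen), and the finitely many points of $B_0$. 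Each of these finitely many pieces is compact — a single $\{a\}\times 2^\omega\cong 2^\omega$, and each $\overline{C_b}\times 2^\omega$ is compact because $C_b\cup\{b\}$ is compact and metrizable with the $C_b\times[s]$ and tails as a basis — so each is covered by finitely many members of the cover, and together with the one set covering $c$ we get a finite subcover. Hausdorffness of $L$ is straightforward from the clopen basis, so $L$ is a zero-dimensional compact space.

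Finally, for the weight: $A\times 2^\omega$ is a subspace of $L$ that is (homeomorphic to) $|A|$ many disjoint copies of $2^\omega$, so $w(L)\geqslant |A|$, and $B$ is a closed discrete-in-itself subspace of size $|B|$, giving $w(L)\geqslant |B|$; since $|K|=|A|+|B|+1 = \max\{|A|,|B|\}$ (both infinite), $w(L)\geqslant |K|$. For the upper bound, I would exhibit a basis of size $|K|$: take $\U$ a clopen basis of $K$ of size $|K|$ (Lemma~\ref{lemmaweightK}), take all sets $\{a\}\times[s]$ for $a\in A$, $s\in\cantortree$ (there are $|A|\cdot\aleph_0=|A|$ of these), and the $|\U|=|K|$ sets of the second type. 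One must check this is actually a basis, not just a subbasis — around a point of $A\times 2^\omega$ the first-type sets already form a neighborhood basis, and around a point of $B\cup\{c\}$ one uses that $\U$ is a basis of $K$ together with, for points of $B$, the fact that a basic neighborhood of $b$ in $L$ intersected with $\overline{C_b}\times 2^\omega$ can be refined using $\overline{C_b}=C_b\cup\{b\}$ being second-countable. The total count is $|K|$, so $w(L)\leqslant |K|$, completing the proof.

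The main obstacle I expect is the compactness argument: one has to organize the reduction so that a single basic neighborhood of $c$ already leaves only a compact (in fact, finite-union-of-compact-metrizable) remainder, and to be careful that each piece $\overline{C_b}\times 2^\omega$ is genuinely compact — this rests on $C_b\cup\{b\}$ being compact (from the Fact) and on the product topology on $L$ restricted to that piece agreeing with the product of the subspace topologies, which is where the precise form of the basis of $L$ must be matched against the local bases in $K$.
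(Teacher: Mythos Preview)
Your argument is essentially right in spirit and much more detailed than the paper's, which simply observes that $L$ is the one-point compactification of the locally compact space $(A\times 2^\omega)\cup B$ and that the displayed basis has size $|\mathcal U|=|K|$. There is, however, a genuine slip in your compactness step: you write that the piece attached to $b\in B_0$ is ``homeomorphic to the compact space $(C_b\cup\{b\})\times 2^\omega$'' and later refer to ``$\overline{C_b}\times 2^\omega$''. Neither of these is a subset of $L$, since in $L$ the fibre over $b$ is the single point $\{b\}$, not $\{b\}\times 2^\omega$. The correct compact piece is $(C_b\times 2^\omega)\cup\{b\}$, which is compact because it is (by inspection of the basic neighbourhoods $((C_b\setminus F)\times 2^\omega)\cup\{b\}$) the one-point compactification of the locally compact space $C_b\times 2^\omega$; it is \emph{not} homeomorphic to $(C_b\cup\{b\})\times 2^\omega$. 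Once you replace that mis-identification, the finite-subcover argument goes through. Note also that your final ``one must check this is actually a basis'' is unnecessary: the topology of $L$ is \emph{defined} by declaring that collection to be a basis, so there is nothing to verify beyond the covering/intersection axioms you already checked.
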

\begin{proof}
    It is routine to check that $L$ is the one-point compactification of $(A\times 2^\omega)\cup B$, and that it is zero-dimensional. For the other part, notice that the size of the basis of $L$ only depends on the size of $\mathcal{U}$, so it follows directly from Lemma \ref{lemmaweightK}.
\end{proof}

For the next step of the recipe, we need to recall the lexicographic order: If $x,y \in 2^\omega \cup \cantortree$ and $x \neq y$, then $x <_{lex} y$ if $x(i) = 0, y(i) = 1$ and $i = \min \{ n \in \omega : x(n) \neq y(n)\}$.

For $s,t\in \cantortree$, define $\Gamma_t^s:2^{\omega}\to 2^{\omega}$ as
\[\Gamma_t^s(\sigma)=\begin{cases}
    t^\frown (0,0,\ldots) & \text{ if } \sigma <_{lex} s^\frown (0,0,\ldots),\\
    t^\frown \lambda & \text{ if } \sigma = s^\frown\lambda,\\
    t^\frown (1,1,\ldots) & \text{ if } \sigma >_{lex} s^\frown (1,1,\ldots).
\end{cases}\]
Note that $\Gamma_t^s$ is a continuous function. Let $q:2^\omega \to [0,1]$ be the standard continuous surjection given by calculating the value of a sequence as a base 2 number, that is, $q(\tau)=\sum_{i\in x^{-1}(\omega)}2^{-(i+1)}$. From now on, assume that we have a pre-basic space $K = A \cup B \cup \{c\}$ and some arbitrary function $D : B \rightarrow \cantor$:

For $b\in B$ define $g_b:(L\setminus\{b\}) \to 2^{\omega}$ as follows:
\begin{itemize}
    \item $g_b(x) = (0,0,0,\ldots)$ if $x\not\in C_b\times2^{\omega},x\neq b$,
    \item $g_b(a,\sigma) = \Gamma_t^{D(b)(t)}(\sigma)$ for $a\in A_t\cap C_b,\sigma\in 2^\omega$,
\end{itemize}
and let $h_b:(L\setminus\{b\}) \to [0,1]$ be defined by $h_b=q\circ g_b$.

Let 
\begin{gather*}
\LL_0 = \{(u,v) \in L \times (2^\omega)^B : \forall_{b\in B} \text{ if } b\neq u \text{ then } v(b) = g_b(u)\},\\
\LL_1 = \{(u,v) \in L \times [0,1]^B : \forall_{b\in B} \text{ if } b\neq u \text{ then } v(b) = h_b(u)\}
\end{gather*}
and equip $\LL_0,\LL_1$ with the subspace topology derived from $L\times (2^\omega)^B$ and $L\times [0,1]^B$, respectively.
Clearly, $\LL_1$ is an image of $\LL_0$, by the continuous function $\pi$ that applies $q$ on each coordinate of $(2^\omega)^B$.
\begin{lemma}
    $w(\LL_0)=w(\LL_1)=|K|.$ \label{lemmaweightfinalspaces}
\end{lemma}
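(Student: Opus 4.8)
The plan is to prove the two inequalities $w(\LL_i)\le|K|$ and $w(\LL_i)\ge|K|$ separately, for $i\in\{0,1\}$, using only elementary properties of the weight function: that $w(Y)\le w(X)$ whenever $Y$ is a subspace of $X$; that $w(X\times X')\le w(X)\cdot w(X')$; that a product of $\kappa$ nontrivial second countable spaces has weight $\kappa$ for $\kappa$ infinite; that an infinite discrete space of cardinality $\kappa$ has weight $\kappa$; together with Lemma~\ref{lemmaweightL}. It is worth stressing that one cannot shortcut the lower bound via cardinality, since the weight of a space may lie far below its cardinality; so some work is genuinely needed to see that $|K|$ is a lower bound for $w(\LL_i)$ and not merely for $|\LL_i|$.

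For the upper bound: by construction $\LL_0$ is a subspace of $L\times(2^\omega)^B$ and $\LL_1$ a subspace of $L\times[0,1]^B$. Since $B$ is infinite, $w\big((2^\omega)^B\big)=w\big([0,1]^B\big)=|B|\le|K|$, and hence, by Lemma~\ref{lemmaweightL}, each of these two products has weight at most $w(L)\cdot|B|=|K|$. Monotonicity of weight under subspaces then gives $w(\LL_0)\le|K|$ and $w(\LL_1)\le|K|$.

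For the lower bound the plan is to exhibit inside each $\LL_i$ a discrete subspace of cardinality $|K|$; since $|K|=\max\{|A|,|B|\}$ with both $|A|$ and $|B|$ infinite, it suffices to produce discrete subspaces of sizes $|A|$ and $|B|$. Let $p\colon\LL_i\to L$ be the restriction of the projection onto the first coordinate; it is continuous. For $a\in A$, the point $\big(a,(0,0,0,\ldots)\big)\in L$ has exactly one $p$-preimage $x_a$ in $\LL_i$, since the defining clause of $\LL_i$ forces the second coordinate of any such preimage to be the function $b\mapsto g_b\big(a,(0,0,0,\ldots)\big)$ (respectively $b\mapsto h_b\big(a,(0,0,0,\ldots)\big)$), which is well defined because $a\ne b$ for every $b\in B$. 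As $\{a\}\times 2^\omega$ is a basic open subset of $L$ and these sets pairwise separate the points $\big(a,(0,0,0,\ldots)\big)$, the open sets $p^{-1}[\,\{a\}\times 2^\omega\,]$ witness that $\{x_a:a\in A\}$ is a discrete subspace of $\LL_i$ of cardinality $|A|$. Likewise, $B$ is a discrete subspace of $L$ — this was already observed for $K$ in the proof of Lemma~\ref{lemmaweightK}, and $B$ inherits the same subspace topology from $L$ as from $K$ — so for each $b\in B$ we may fix an open $W_b\subseteq L$ with $W_b\cap B=\{b\}$ together with a point $y_b\in\LL_i$ satisfying $p(y_b)=b$ (such a point exists, its $b$-th second coordinate being unconstrained). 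The open sets $p^{-1}[W_b]$ then witness that $\{y_b:b\in B\}$ is a discrete subspace of $\LL_i$ of cardinality $|B|$. Combining, $w(\LL_i)\ge\max\{|A|,|B|\}=|K|$, which with the previous paragraph finishes the proof.

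I do not expect a genuine obstacle: the statement is essentially bookkeeping on top of Lemmas~\ref{lemmaweightK} and~\ref{lemmaweightL}. The only small points requiring care are that the assignments $a\mapsto x_a$ are single-valued (i.e.\ the relevant point of $L$ has a unique $p$-preimage) and that the chosen first-coordinate neighbourhoods genuinely isolate the chosen points within the discrete sets; both are immediate from the explicit bases of $L$, $\LL_0$ and $\LL_1$ and from the facts that the points of $A$ are isolated in $K$ while $B$ is discrete in $K$. Note that this argument uses neither compactness nor Hausdorffness of $\LL_0,\LL_1$; alternatively, granting that these spaces are compact (being closed in $L\times(2^\omega)^B$ and $L\times[0,1]^B$), one could instead obtain the lower bound from the observation that $p$ is a continuous surjection onto $L$ and that continuous surjections of compacta do not increase weight, again combined with $w(L)=|K|$.
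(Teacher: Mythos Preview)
Your proof is correct and in fact considerably more careful than the paper's own argument, which consists of the single sentence ``It follows from Lemma~\ref{lemmaweightL}, and the fact that $w((2^\omega)^B)=w([0,1]^B)=|B|$.'' In particular the paper does not spell out the lower bound at all, whereas you give two independent arguments for it: the discrete-subspace argument and, as an alternative, the continuous-surjection-onto-$L$ argument via compactness. Either suffices; the second is presumably what the authors have in mind, so your proposal subsumes the paper's approach.
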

\begin{proof}
    It follows from Lemma \ref{lemmaweightL}, and the fact that $w((2^\omega)^B) = w([0,1]^B) = |B|$.
\end{proof}

To finish the recipe, we only need to show that $\LL_0$ is Radon-Nikod\'{y}m and that $\LL_1$ is not Radon-Nikod\'{y}m.

%The proofs of the following two theorems can be found in \cite[Proposition 4.1. and 4.3. resp.]{AvilesKoszmider}. Our construction differs only by a slight change in the property (\ref{basic}) of the basic* spaces, compared to the basic spaces defined by Avil\'{e}s and Koszmider.
\begin{theorem}
    If $K$ is pre-basic, and $D$ is arbitrary, then $\LL_0$ is a Radon-Nikod\'{y}m compact space.\label{l0}
\end{theorem}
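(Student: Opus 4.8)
The plan is to exhibit a lower semi-continuous metric that fragments $\LL_0$, using Namioka's characterization. The natural candidate lives on the coordinate $(2^\omega)^B$: for $(u,v),(u',v')\in\LL_0$ set
\[
\rho\bigl((u,v),(u',v')\bigr)=\sup_{b\in B}\, 2^{-n_b}\quad\text{where } n_b=\min\{k:v(b)(k)\neq v'(b)(k)\},
\]
with the convention $2^{-n_b}=0$ when $v(b)=v'(b)$. This is a metric on the whole product $(2^\omega)^B$ (a sup of ultrametrics, hence an ultrametric), and since each coordinate projection $v\mapsto v(b)(k)$ is continuous and $\rho$ is a supremum of continuous functions of $((u,v),(u',v'))$, the metric $\rho$ is lower semi-continuous on $\LL_0\times\LL_0$. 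The first thing I would check is that $\rho$ genuinely separates the points of $\LL_0$, i.e.\ that it is a metric and not merely a pseudometric: if $(u,v)\neq(u',v')$ then either $v\neq v'$, in which case we are done, or $v=v'$ but $u\neq u'$; in the latter case I would argue that the defining equations $v(b)=g_b(u)$ force $u$ to be recoverable from $v$, except possibly when $u$ or $u'$ lies in $B$, and there handle the cases $u=b$ (so $v(b)$ is unconstrained, but $u'\neq b$ forces $v'(b)=g_b(u')$, and distinctness of the $C_b$'s plus the definition of $g_b$ on $C_b\times 2^\omega$ versus off it yields a coordinate of disagreement) separately. This point-separation bookkeeping is where I expect to spend the most care.

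Next I would verify fragmentation. Fix $\varepsilon>0$ and a nonempty closed $C\subseteq\LL_0$; I want a relatively open $U$ with $U\cap C\neq\emptyset$ and $\rho$-diameter $<\varepsilon$. Choose $n$ with $2^{-n}<\varepsilon$. The idea is that $\LL_0$ maps onto $L$ via the first projection, and $L$ itself is a one-point compactification in which $A\times 2^\omega$ consists of points with a neighbourhood basis of clopen sets $\{a\}\times[s]$ that are small in the $2^\omega$-factor, while $B\cup\{c\}$ is a "small" remainder. If the projection of $C$ to $L$ meets some $\{a\}\times 2^\omega$ with $a\in A_t$, then on $\{a\}\times[s]$ for $|s|$ large every coordinate $g_b(a,\sigma)=\Gamma_{t}^{D(b)(t)}(\sigma)$ varies only within a set of $\rho$-diameter $2^{-n}$ once $s$ refines enough (because $\Gamma$ is continuous and $a$ lies in only finitely many relevant $C_b$ up to a set where $g_b$ is constantly $0$), and the coordinates with $b\notin C_b$ contribute $0$; so a basic neighbourhood of such a point, intersected with $C$, has small diameter. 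If the projection of $C$ misses every $\{a\}\times 2^\omega$, then $C$ projects into $B\cup\{c\}$, which is countable-plus-a-point and scattered, so $C$ has a relatively isolated point and any singleton works. I would organize this as: (i) reduce to the case that $\overline{\pi_L[C]}$ meets some $A$-fibre, (ii) on that fibre, use continuity of the finitely many nonconstant $g_b$ and the fact that all other coordinates are constant $0$ to get a clopen box of $\rho$-diameter $<\varepsilon$, (iii) dispatch the remaining case by scatteredness of $B\cup\{c\}$.

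The main obstacle, as I see it, is step (ii): controlling the $\rho$-diameter uniformly over the infinitely many coordinates $b\in B$. The saving observation is the first bullet of the Fact — for $a\in A$, the point $a$ lies in $C_b$ for at most... well, for possibly infinitely many $b$, but crucially $g_b(a,\sigma)=(0,0,\dots)$ for every $b$ with $a\notin C_b$, and for the (countably many) $b$ with $a\in A_t\cap C_b$ the value is $\Gamma_t^{D(b)(t)}(\sigma)$, which as $\sigma$ ranges over a small basic clopen set $[s]$ stays within $2^{-(|t|+|s|-|\text{something}|)}$ of a fixed string — uniformly in $b$, since the "$t$" is determined by which $A_t$ contains $a$, not by $b$. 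So shrinking $[s]$ shrinks all these coordinates simultaneously, and the sup stays small. I would make this uniformity explicit with a short estimate on $q\circ\Gamma_t^{s'}$ and then conclude. Compactness of $\LL_0$ and zero-dimensionality are inherited from $L\times(2^\omega)^B$ by the same one-point-compactification argument used in Lemma \ref{lemmaweightL}, so those require only a remark.
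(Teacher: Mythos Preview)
Your proposed metric $\rho$ is \emph{not} a metric on $\LL_0$; it is only a pseudometric, and the point-separation bookkeeping you flag as the delicate step actually fails. Concretely, take $a\in A_\emptyset$ and consider the two points $(c,\vec 0)$ and $((a,\vec 0),\vec 0)$ of $\LL_0$ (here $\vec 0\in(2^\omega)^B$ is the constant sequence). For the first point, $g_b(c)=\vec 0$ for every $b$, so $v=\vec 0$. For the second, if $a\notin C_b$ then $g_b(a,\vec 0)=\vec 0$, and if $a\in C_b$ then $g_b(a,\vec 0)=\Gamma_\emptyset^{\,D(b)(\emptyset)}(\vec 0)=\vec 0$ as well, since $\vec 0\le_{lex}s^\frown\vec 0$ for every $s$. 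Thus both points have the same second coordinate and $\rho$-distance zero. More generally, for every $b\in B$ the point $(b,\vec 0)$ also lies in $\LL_0$, so the whole set $\{(c,\vec 0)\}\cup\{(b,\vec 0):b\in B\}\cup\{((a,\vec 0),\vec 0):a\in A_\emptyset\}$ collapses under $\rho$. Since a fragmenting lower semi-continuous \emph{pseudo}metric says nothing (the zero pseudometric fragments every space), this is a genuine gap, not a bookkeeping issue.

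The paper avoids this difficulty entirely: it does not try to produce a lower semi-continuous metric directly. Instead it observes that $\LL_0$ is zero-dimensional and invokes Arvanitakis's theorem that a totally disconnected quasi Radon--Nikod\'ym compact is Radon--Nikod\'ym, reducing the task to showing $\LL_0$ is \emph{quasi} Radon--Nikod\'ym; that weaker property (a fragmenting metric for which distinct points have open neighbourhoods at positive distance, with no lower semi-continuity required) is then obtained exactly as in \cite[Prop.~4.1]{AvilesKoszmider}, using only the continuity of the maps $\Gamma_t^s$. If you want to repair your direct approach, you would need to add to $\rho$ a term that sees the $L$-coordinate, but producing a lower semi-continuous such term on a non-metrizable $L$ is precisely what the Arvanitakis reduction lets you avoid.
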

\begin{proof}
    It is immediate to see that $\LL_0$ is zero-dimensional, so by \cite[Theorem 3.6]{ArvanitakisRNComp} we only need to check that it is quasi Radon-Nikod\'{y}m. The proof is exactly the same as the proof of \cite[Prop. 4.1]{AvilesKoszmider} as it depends only on the continuity of the functions $\Gamma^s_t$.
\end{proof}
\begin{theorem}
   If $K$ is basic* and $D$ witnesses that $K$ satisfies the condition (\ref{basic}), then $\LL_1$ is not a Radon-Nikod\'{y}m compact space.\label{l1}
\end{theorem}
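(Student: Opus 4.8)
The plan is to argue by contradiction, imitating \cite{AvilesKoszmider}. Suppose $\rho$ is a lower semi-continuous metric on $\LL_1$ that fragments it; after rescaling we may assume $\rho\leqslant 1$, and we fix a small $\varepsilon>0$. For $(a,\sigma)\in A\times 2^\omega$ write $z(a,\sigma)$ for the unique point of $\LL_1$ whose $L$-coordinate is $(a,\sigma)$ (it is unique since $(a,\sigma)\notin B$, so its second coordinate is forced to be $(h_b(a,\sigma))_{b\in B}$). For each $a\in A$ the set $Z_a:=\{z(a,\sigma):\sigma\in 2^\omega\}$ is the preimage of the clopen set $\{a\}\times 2^\omega\subseteq L$ under the projection $\LL_1\to L$, hence a clopen copy of $2^\omega$ inside $\LL_1$, with $\sigma\mapsto z(a,\sigma)$ a homeomorphism onto $Z_a$.

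First I would read a colouring of $A$ off from $\rho$. Since $\rho$ fragments the second countable space $Z_a$, there is a non-empty basic clopen set $\{z(a,\sigma):\sigma\supseteq s(a)\}$ of $\rho$-diameter less than $\varepsilon$, for some $s(a)\in\cantortree$; this defines a function $s:A\to\cantortree$. Feeding $s$ into the basic$^*$ property~(\ref{basic}), with the very same $D$ that was used to define the maps $h_b$, produces $b\in B$ and an infinite set $N\subseteq\omega$ such that for every $n\in N$ and every $t\in 2^n$ the set $E^n_t:=\{a\in A_t\cap C_b:s(a)=D(b)(t)\}$ is infinite. The functions $\Gamma^s_t$ now enter: for $a\in E^n_t$ the $\rho$-small clopen set $S_a:=\{z(a,\sigma):\sigma\in[D(b)(t)]\}$ is sent by the $b$-th coordinate map onto the dyadic interval $q([t])$ of length $2^{-n}$, because $h_b(z(a,\sigma))=q(\Gamma^{D(b)(t)}_t(\sigma))$ and $\Gamma^{D(b)(t)}_t$ maps the cone $[D(b)(t)]$ onto $[t]$; and as $t$ runs over $2^n$ these intervals run through the $2^n$ consecutive dyadic intervals of length $2^{-n}$ covering $[0,1]$. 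Enumerate $E^n_t$ injectively; being an injective sequence in $C_b$, its members converge to $b$ in $K$. Fixing $n\in N$, $t\in 2^n$ and $r,r'\in q([t])$, choose for each $a\in E^n_t$ points $\sigma_a,\sigma'_a\in[D(b)(t)]$ with $h_b(z(a,\sigma_a))=r$ and $h_b(z(a,\sigma'_a))=r'$; then $z(a,\sigma_a)\to(b,v_r)$ and $z(a,\sigma'_a)\to(b,v_{r'})$ in $\LL_1$, where $(b,v_r)$ is the point of the arc $F_b:=\{(b,v)\in\LL_1\}\cong[0,1]$ with $v(b)=r$, while both $z(a,\sigma_a)$ and $z(a,\sigma'_a)$ lie in the single set $S_a$ of $\rho$-diameter less than $\varepsilon$. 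Lower semi-continuity of $\rho$ then yields $\rho((b,v_r),(b,v_{r'}))\leqslant\varepsilon$; so for every $n\in N$ the arc $F_b$ is covered by $2^n$ consecutive sub-arcs of $\rho$-diameter at most $\varepsilon$.

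The crux of the argument — and the step I expect to be the main obstacle — is to turn this control, valid simultaneously at the infinitely many fully matched scales $2^{-n}$ ($n\in N$), into a genuine contradiction with the lower semi-continuity of $\rho$; here one must also use that $\rho$ fragments $F_b$ itself and that $\bigcup_{b'\in B}F_{b'}$ accumulates at the point of $\LL_1$ lying over $c$, which is exactly where the situation differs in substance from that of $\LL_0$ in Theorem~\ref{l0}. This last part is carried out as in \cite{AvilesKoszmider}: the only change from their basic space is that the requirement ``for every $t\in\cantortree$'' has been weakened to ``for every $t\in 2^n$, for the infinitely many matched $n$'', which is enough because at each such level the intervals $q([t])$ already exhaust $[0,1]$, and since the maps $\Gamma^s_t$ are continuous the remaining verification goes through unchanged.
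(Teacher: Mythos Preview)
Your setup is essentially right, but there is a genuine gap at the point you yourself flag as ``the main obstacle'': with a \emph{single fixed} $\varepsilon>0$, the chaining argument cannot close. What you obtain is that for every $n\in N$ the arc $F_b\cong[0,1]$ is covered by $2^n$ consecutive sub-arcs each of $\rho$-diameter $\leqslant\varepsilon$; the triangle inequality then only gives $\rho(b\oplus 0,b\oplus 1)\leqslant 2^n\varepsilon$, which blows up rather than tending to $0$. Letting $\varepsilon\to 0$ does not help, since the witness $b$ (and the set $N$) depends on $s$, hence on $\varepsilon$, so you never pin down two fixed distinct points at $\rho$-distance $0$.

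The missing idea, present both in \cite{AvilesKoszmider} and in the paper, is to make the tolerance depend on the level: for $a\in A_t$ apply fragmentability to the subset $\{z(a,\sigma):\sigma\in[t]\}$ (not to all of $Z_a$) and choose $s(a)\supseteq t$ so that the resulting clopen has $\rho$-diameter $<4^{-|t|}$. Then, after invoking condition~(\ref{basic}) and passing to limits by lower semi-continuity exactly as you describe, one gets $\rho(b\oplus t^0,b\oplus t^1)\leqslant 4^{-n}$ for every $t\in 2^n$ with $n\in N$. Chaining the $2^n$ sub-arcs now yields $\rho(b\oplus 0,b\oplus 1)\leqslant 2^n\cdot 4^{-n}=2^{-n}$ for infinitely many $n$, hence $\rho(b\oplus 0,b\oplus 1)=0$, contradicting that $\rho$ is a metric. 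No appeal to fragmentability of $F_b$, and no use of the point over $c$, is needed; your speculation about those ingredients is a red herring.
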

\begin{proof}
    The proof is almost identical to \cite[Prop. 4.3]{AvilesKoszmider}, with a small modification to take into account the different property of the function $D$. First, notice that, for $u \in L$ such that $ u = \langle a,x \rangle \in A \times 2^\omega $, there is a single function $v \in [0,1]^B$ (in this case, the function $v(b) = h_b(u)$) such that $\langle u,v \rangle \in \LL_1$. Such $\langle u,v \rangle$ will be denoted as $a + x$.  Let $\vec{0}$ and $\vec{1}$ be the constant functions $0$ and $1$, respectively.

    Aiming towards a contradiction, assume that $\LL_1$ is Radon-Nikod\'{y}m, so there must be a lower semi-continuous metric $\delta$ fragmenting $\LL_1$. For each $t \in \cantortree$ and each $a \in A_t$, consider the set $\{ a + x : x \in [t] \}$, and apply the fragmentability of $\LL_1$ to this set to find $s(a) \in \cantortree$ extending $t$ such that 

\begin{equation}
\tag{*}\label{smldm}\delta (a + s(a)^\frown\vec{0}, a + s(a)^\frown\vec{1}) < \frac{1}{4^{|t|}}.
\end{equation}

Then, according to condition (\ref{basic}), there must be some $b \in B$ such that, there is an infinite set $W \subseteq \omega$ such that for every $n \in W$ and every $t \in 2^n$, the set $\{ a \in A_t \cap C_b : D(b)(t) = s(a) \}$ is infinite. Enumerate this set as $\{ a^t_k : k \in \omega \}$.

For every $\xi \in [0,1]$ there is a unique function $v : B \rightarrow [0,1]$ such that $v(b) = \xi$ and $\langle b, v \rangle \in \LL_1$ (note that for the rest of $b' \neq b, v(b') = h_b(b')$). Such a pair $\langle b, v \rangle$ will be denoted as $b \oplus \xi$. For $t \in \cantortree$, define $t^0 = q(t^\frown \vec{0})$ and $t^1 = q(t^\frown \vec{0})$.

Note that, for such $a^t_n$, and since $s(a) = D(b)(t)$ extends $t$, we have that
\begin{gather*}
  h_b(\langle a^t_n,D(b)(t)^\frown \vec{0}\rangle) =  q(g_b(\langle a^t_n, D(b)(t)^\frown \vec{0}\rangle)) =\\ = q(\Gamma_t^{D(b)(t)}(D(b)(t)^\frown \vec{0})) = t^0.  
\end{gather*}
Similarly, we have that 
$$
h_b(\langle a^t_n,D(b)(t)^\frown \vec{1}\rangle) = t^1.
$$
Now, observe that, for every sequence $\{a^t_n : n \in \omega \}$ as before, we have that $\{a^t_n \} \rightarrow b$ as $n \rightarrow \infty$, and therefore $a^t_n + D(b)(t)^\frown \vec{0} \rightarrow b \oplus t^0$ and $a^t_n + D(b)(t)^\frown \vec{1} \rightarrow b \oplus t^1$.

Using this, (\ref{smldm}), and the lower semi-continuity of $\delta$ we get that 
\begin{equation}\tag{**}\label{smldm2}
    \delta(b \oplus t^0, b \oplus t^1) \leq \frac{1}{4^{|t|}}
\end{equation}

Finally, notice that for every $n \in \omega$, there is a way to order $2^n = \{ t_j : 1 \leq j \leq 2^{|n|} \}$ such that $t_0$ is the sequence of zeroes, $t_{2^{|n|}}$ is the sequence of ones, and for $j < 2^{|n|}, t_j^1 = t_{j+1}^0$. Using this, (\ref{smldm2}), and the triangle inequality, we get that for all $n \in W$,
$$
\delta(b \oplus \vec{0}, b \oplus \vec{1}) \leq \frac{2^n}{4^n} = \frac{1}{2^n}.
$$
Since this happens for infinitely many $n$, we get that $\delta(b \oplus \vec{0}, b \oplus \vec{1}) = 0$, which is the contradiction we were looking for, as $\delta$ is a metric and $\delta(b \oplus \vec{0}, b \oplus \vec{1}) > 0$.
\end{proof}

This finishes the recipe, so all is left is to construct a basic* space of cardinality smaller than $\mathfrak{c}$.
%The proof of \cite[Prop. 4.1.]{AvilesKoszmider} does not depend on the function $D$ -- $\Gamma^s_t$ is always continuous. To obtain \ref{l1}, notice that in the proof of \cite[Prop. 4.3.]{AvilesKoszmider}, each level provides a separate bound, and it is enough to work with a subsequence of levels to reach the desired conclusion. Thus, property (\ref{basic}) is enough to perform the proof. %TODO can this be written better??
\section{Constructing a small basic* space}

This section is the main part of this work, and it will be solely dedicated to construct a basic* space of small weight, under $\diamondsuit (\non{\M})$. The important part of the construction is to show how the sequence sets $\langle C_b : b \in B \rangle$ and the function $D$ from the condition (\ref{basic}) is obtained. Note that, for fixed $A,B$, the sequence $\langle C_b:b\in B\rangle$ and the function $D$ define a basic* space exactly if the condition (\ref{basic}) is satisfied and $\langle C_b: b\in B\rangle$ is an \emph{almost-disjoint family}, i.e., for any $b\neq d$, the set $C_b\cap C_d$ is finite.

\begin{theorem}
    Under $\diamondsuit (\non{\M})$ there is a basic$^*$ space of size $\aleph_1$.\label{smallbasic}
\end{theorem}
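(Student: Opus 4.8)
The plan is to build the space by transfinite recursion of length $\omega_1$, using the sequence $\Delta:\omega_1\to\cantor\times\omega^{\cantortree\times\omega}$ provided by $\diamondsuit(\non\M)$ to guess, stage by stage, the ``bad'' colouring functions $s:A\to\cantortree$ that we must defeat via condition (\ref{basic}). First I would fix $A=\dot\bigcup_{t\in\cantortree}A_t$ with each $A_t$ a countable set of isolated points (say $A_t=\{t\}\times\omega$), and let $B=\{b_\alpha:\alpha<\omega_1\}$. The objects to be constructed recursively are, for each $\alpha<\omega_1$, a countably infinite set $C_{b_\alpha}\subseteq A$ and a value $D(b_\alpha)\in\cantor$; the standing requirement maintained along the recursion is that $\{C_{b_\alpha}:\alpha<\omega_1\}$ is almost disjoint, which is easy to preserve since at stage $\alpha$ we only need $C_{b_\alpha}$ to be almost disjoint from the countably many earlier $C_{b_\beta}$, $\beta<\alpha$, and there is plenty of room in each $A_t$.

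The heart of the argument is coding the construction as a Borel function $F:2^{<\omega_1}\to\M$ so that the diamond sequence $\Delta$ does the work. I would arrange a bookkeeping bijection between $2^{<\omega_1}$ (or rather an initial segment $2^\alpha$) and the partial data built so far, so that a branch $\gamma\in2^{\omega_1}$ encodes a candidate colouring $s_\gamma:A\to\cantortree$ together with all of $\langle C_{b_\beta},D(b_\beta):\beta<\alpha\rangle$. At stage $\alpha$, given the restriction $\gamma\restriction\alpha$, the function $F$ outputs a meager $F_\sigma$ set $M_\alpha\subseteq\cantor\times\omega^{\cantortree\times\omega}$ designed so that any point \emph{outside} $M_\alpha$ encodes both a value $D(b_\alpha)\in\cantor$ and a ``catching'' instruction telling us, for infinitely many $n$ and every $t\in2^n$, infinitely many elements of $A_t$ to throw into $C_{b_\alpha}$ on which the guessed colour $s_\gamma$ equals $D(b_\alpha)(t)$. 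The diamond guarantees that for the genuine (externally given) bad colouring $s$ — recovered as $s_\gamma$ for the appropriate branch $\gamma$ — there are stationarily many $\alpha$ with $\Delta(\alpha)\notin F(\gamma\restriction\alpha)$, and at any one such $\alpha$ we have succeeded: the set $\{a\in A_t\cap C_{b_\alpha}:D(b_\alpha)(t)=s(a)\}$ is infinite for infinitely many $n$ and all $t\in2^n$, which is exactly (\ref{basic}).

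The delicate points, and where I expect to spend the most care, are two. The first is the \textbf{Borelness} of $F$: I must check that, for each fixed $\alpha$, the map $2^\alpha\to\M$ is Borel, which means the recursion recovering $\langle C_{b_\beta},D(b_\beta):\beta<\alpha\rangle$ from $\gamma\restriction\alpha$ must be carried out by Borel operations on the Polish coding spaces, and that the assignment of the meager set $M_\alpha$ depends Borel-measurably on that data. This forces the recursion to be ``continuous enough'' — in particular the choice of $C_{b_\alpha}$ at stage $\alpha$ should be a Borel function of $\Delta(\alpha)$ and the earlier data, not an arbitrary choice — so the bookkeeping has to be set up with this constraint in mind from the start. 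The second point is verifying that the set $M_\alpha$ we write down is genuinely meager: this is where the combinatorics of ``for every $t\in2^n$ simultaneously'' has to be encoded as a countable intersection of dense open complements, i.e.\ one arranges that the complement of $M_\alpha$ is comeager because demanding, for infinitely many $n$, a correct coordinate for all $2^n$ values of $t$ at once is a comeager condition on the product $\cantor\times\omega^{\cantortree\times\omega}$ (a finite constraint at each level $n$, density at each level, hence a $G_\delta$-dense set). Once those two verifications are in place, the almost-disjointness is routine, conditions (1)--(4) of pre-basic hold by construction, and together with Theorems \ref{l0} and \ref{l1} and Lemma \ref{lemmaweightfinalspaces} this yields the Main Theorem.
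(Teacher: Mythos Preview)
Your overall architecture is reasonable, but there is a fatal gap hidden in the very first choice: you take each $A_t$ to be \emph{countable} (``say $A_t=\{t\}\times\omega$''), and with that choice condition~(\ref{basic}) is outright unsatisfiable. Indeed, if $A_t$ is countable then one can pick $s\restriction A_t$ to be injective for every $t$; then for any $b\in B$ and any $t\in\cantortree$ the set $\{a\in A_t:s(a)=D(b)(t)\}$ has at most one element, so $\{a\in A_t\cap C_b:s(a)=D(b)(t)\}$ can never be infinite and no $b$ witnesses~(\ref{basic}). This also explains why your plan to code the entire colouring $s$ into the branch $\gamma$ looked feasible: it only is when $A$ is countable, which is exactly the case in which the target statement is false. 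The paper instead takes $A_t=\{t\}\times\omega_1$, so that for any $s$ one can choose $H(t)\in\cantortree$ with $G_t=\{a\in A_t:s(a)=H(t)\}$ uncountable, hence cofinal in $\omega_1$; it is the pair $(\langle G_t\rangle,H)$ --- not $s$ itself --- that gets encoded restriction-preservingly into $\gamma$, and cofinality of $G_t$ is what guarantees the ``spread'' of $G_t$ across the disjointified pieces $C_n$ so that the meager set really is meager and the almost-disjointness can be maintained simultaneously with hitting $G_t$.

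A second, related issue is your treatment of almost-disjointness: even granting a workable $A_t$, you assert that the complement of $M_\alpha$ is comeager because the constraints are ``dense at each level'', but you never explain why the infinite set you need inside $A_t$ is not already almost covered by finitely many earlier $C_{b_\beta}$'s (which would make the desired $C_{b_\alpha}$ impossible). In the paper this is handled by the boundedness $C_\alpha\subseteq\cantortree\times\alpha$ together with the cofinality of $G_t$: each disjointified piece $C_n$ is bounded, so infinitely many of them must meet the cofinal $G_t$, and this is precisely what makes the partial function $f_t$ have infinite domain and the associated set meager. Your sketch contains no analogue of this mechanism.
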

\begin{proof}
Let $B$ be the set of ordinals of $\omega_1$ that are the limit of limit ordinals, and, for each $t \in \cantortree$ let $A_t = \{t\} \times \omega_1$. Fix bijective $\varepsilon_\beta : \omega \rightarrow \lim(\omega_1) \cap \beta, e_\beta:\omega \to \beta$ for each $\beta \in B$ and let $\pi_1, \pi_2$ be the projection in the first and second coordinates, respectively.

First, we are going to construct a suitable function $F$ for $\diamondsuit(\non{\M})$. 

Let $\Gamma$ be the set of triples $(\langle C_\alpha:\alpha<\beta, \alpha \in \lim(\omega_1)\rangle, \langle G_t:t\in \cantortree \rangle, H)$ with $\beta\in B,\, H\in \cantor,\, C_\alpha \subseteq \cantortree\times \alpha,\, G_t\subseteq \{t\} \times \beta$. The \emph{length} of $\gamma \in \Gamma$ is the ordinal $\beta$ as in before.  Given $\gamma\in \Gamma$ of length $\beta$ and $\delta \in B\cap \beta$ let $\gamma\restriction \delta=(\langle C_\alpha:\alpha<\delta, \alpha \in \lim(\omega_1)\rangle, \langle G_t\cap (\cantortree\times \delta) :t\in \cantortree \rangle, H)$. An encoding $E:\Gamma\to 2^{<\omega_1}$ \emph{preserves restriction} if for any $\gamma\in \Gamma$ of length $\beta\in B$ and any $\delta\in B\cap \beta$ we have that $E(\gamma)\in 2^\beta$ and  $E(\gamma\restriction\delta)=E(\gamma)\restriction \delta$.
Such an encoding can be constructed inductively by sending $\langle C_\alpha: \beta\leqslant \alpha < \beta+\omega^2,\alpha\in \lim(\omega_1)\rangle$ and $ \langle G_t\cap (\cantortree\times [\beta,\beta+\omega^2) :t\in \cantortree \rangle$ into $2^{[\beta,\beta+\omega^2)}$. We leave the details of the construction to the reader and from now on we will implicitly use a fixed, restriction-preserving Borel encoding, thus assuming that the domain of $F$ is $\Gamma$.

We will say that $\gamma\in \Gamma$ is \emph{suitable} if for each $\alpha, \alpha' \in \lim(\omega_1)\cap \beta$ with $\alpha \neq \alpha'$ and each $t \in \cantortree$,
\begin{enumerate}[(a)]
    \item $C_\alpha \cap A_t$ is infinite,
    \item $\bigcup_{\alpha < \beta}C_\alpha = \cantortree \times \beta$,
    \item $G_t \subseteq \{t\} \times \beta$ is cofinal in $\beta$,
    \item $C_\alpha \cap C_{\alpha'}$ is finite.
\end{enumerate}

For such suitable sequences, define the sets 
$$
C_n = C_{\varepsilon_\beta (n)} \setminus \bigcup_{k < n} C_{\varepsilon_\beta (k)}.
$$
Note that, for each $t \in \cantortree$, the family $\{ C_n \cap G_t : n \in \omega \}$ is a partition of $G_t$. Then, define $f_t (n) = \min e^{-1}_\beta (\pi_2( C_n \cap G_t))$ whenever is possible (otherwise $n \notin dom(f_t)$). The properties (a),(b) and (c) guarantee that each $f_t$ is an infinite partial function.

Let 
$$
F(\gamma) = \{ \langle x,y \rangle \in \cantor \times \omega^{\,\cantortree \times \omega} : \forall^\infty n \in \omega\,\exists t \in 2^{n}
$$
$$
\text{such that either }H(t) \neq x(t) \text{ or }\forall^\infty \ell \in \mathrm{dom}(f_t)\,( f_t(\ell) \geq y(t,\ell)) \}.
$$

It is routine to check that the following two sets are meager, respectively in $\cantor$ and $\omega^{\,\cantortree \times \omega}$: $J_\gamma=\{x\in \cantor: \forall^\infty n\in\omega\, \exists t\in 2^n\, H(t)\neq x(t)\}$ and $ I_\gamma=\{y\in \omega^{\,\cantortree \times \omega}:\exists t\in \cantortree\,\forall^\infty \ell\in \mathrm{dom}(f_t)\,f_t(\ell)\geqslant y(t,\ell)\}$. Now, if $\langle x,y\rangle \in F(\gamma)$ then $x\in J_\gamma$ or $y\in I_\gamma$, hence $F(\gamma)$ is meager in $\cantor \times \omega^{\cantortree \times \omega}$. 

For a non-suitable sequence $\gamma$ let $F(\gamma)=\emptyset$. Since all the constructions carried out in order to build $F(\gamma)$ were only dependent of the natural numbers, fixed bijections between countable sets, and the elements of the sequence $\gamma$, we have that $F$ is a Borel function.

\begin{remark}
    The important thing to keep in mind about the construction of $F$ is that, if $\gamma = (\langle C_\alpha:\alpha<\beta, \alpha \in \lim(\omega_1)\rangle, \langle G_t:t\in \cantortree \rangle, H)$ is suitable, then $\langle x,y \rangle \notin F(\gamma)$ implies that, for infinitely many levels of $\cantortree$, $x$ and $H$ will agree, and for $t$ in those levels, $y(t)(\ell) > f_t(\ell)$ for infinitely many $\ell \in dom(f_t)$ for the function $f_t$ defined above. 
\end{remark}

Let $\Delta : \omega_1 \rightarrow \cantor \times \omega^{\,\cantortree \times \omega}$ be the sequence given by 
$\diamondsuit(\non{\M})$ for that function $F$. 
Let $D = \pi_1(\Delta)$. Recursively we are going to construct a sequence $\langle C_\alpha : \alpha \in \lim (\omega_1) \rangle$ such that, for each $\alpha \in \lim (\omega_1)$,
\begin{enumerate}[(i)]
    \item $C_\alpha \subseteq \cantortree \times \alpha$,
    \item for each $t \in \cantortree$, $C_\alpha \cap A_t$ is infinite,
    \item for each $\alpha' < \alpha$, $C_{\alpha} \cap C_{\alpha'}$ is finite,
    \item if $\alpha \in B$, $\bigcup_{\alpha' < \alpha}C_{\alpha'} = \cantortree \times \alpha$,
    \item if $\langle C_\alpha' : \alpha' < \alpha \rangle$ forms a suitable triple $\gamma$ with some $\langle G_t : t \in \cantortree \rangle$ and $H \in \cantor$ and if $\Delta(\alpha) \notin F(\gamma)$, then there are infinitely many $n\in \omega$ such that for every $t \in 2^n$ the set $C_\alpha \cap G_t$ is infinite.
\end{enumerate}

For $\beta \in \lim(\omega_1) \setminus B$ let $\alpha$ be the maximal element of $\lim(\omega_1)\cap \beta$ and set $C_\beta=\cantortree\times[\alpha,\beta)$. Assume that $\beta \in B$ and that the set $C_\alpha \subseteq A \cap (\cantortree \times \alpha)$ has been constructed for $\alpha < \beta$ with $\alpha \in \lim (\omega_1)$. Exactly like before, let
$$
C_n = C_{\varepsilon_\beta (n)} \setminus \bigcup_{k < n} C_{\varepsilon_\beta (n)}.
$$
 Let $C_\beta = \{ \langle t,a\rangle \in C_n : n \in \omega, t \in 2^{\leq n} \text{ and } e^{-1}_\beta (a) \leq \pi_2(\Delta(\beta))(t,n) \}$. Note that, for each $n$, $C_\beta$ takes at most $\pi_2(\Delta(\beta))(t,n)$ many elements from $C_n \cap A_t$, and only for finitely many $t$, so therefore, for each $\alpha < \beta$, $C_\alpha \cap C_\beta$ is finite, and therefore the properties (i), (ii), (iii) and (iv) are satisfied.

To see that our construction satisfies property (v), assume that $\Delta(\beta) \notin F(\gamma)$ for some suitable $\gamma$ containing $\langle C_\alpha : \alpha < \beta \rangle$, $\langle G_t : t \in \cantortree \rangle$ and $H \in \cantor$. Then there is an infinite amount of $n\in \omega$ such that for all $t \in \cantortree$, we have that both $H(t) = \pi_1(\Delta(\beta))(t) = D(\beta)(t)$ and $f_t (\ell) < \pi_2(\Delta (\beta))(t)(\ell)$ for infinitely many $\ell \in \mathrm{dom}(f_t)$, where $f_t (\ell) = \min e^{-1}_\beta (\pi_2( C_\ell \cap G_t))$ (see the Remark above). For $t \in \cantortree$ and $\ell \geq |t|$ such that $f_t (\ell) < \pi_2(\Delta (\beta))(t)(\ell)$, we have that at least $e_\beta (\min e^{-1}_\beta (\pi_2( C_\ell \cap G_t))) \in C_\beta \cap G_t$, and, in particular, the set $C_\beta \cap G_t$ is infinite, which is the conclusion we wanted.

The properties (1), (2), (3) and (4) follow easily from (i), (ii) and (iii). We will show that Property (5) is satisfied: Let $s: A \rightarrow \cantortree$. For each $t \in \cantortree$, pick an element $H(t) \in \cantortree$ such that $H(t) = s(\alpha)$ for uncountably many $\alpha \in A_t$. Let $G_t = \{ \alpha \in A_t : H(t) = s(\alpha) \}$. Notice that, for each $t \in \cantortree$, the set of all $\delta \in B$ such that $G_t$ is cofinal in $\delta$ is a club and therefore the set $\mathcal{C} = \{ \delta \in B : (\forall t\in\cantortree) \ G_t\text{ is cofinal in }\delta\}$ is a club.

Consider a triple $\gamma = ( \langle C_\alpha : \alpha \in \lim(\omega_1)\rangle,\langle G_t: {t \in \cantortree}\rangle , H )$. Since $\Delta$ was the function witnessing $\diamondsuit (\non{\M})$, the set $\mathcal{S} = \{ \beta \in \omega_1 : \Delta(\beta) \notin F(\gamma \restriction \beta) \}$ is stationary in $\omega_1$ and therefore $\mathcal{S} \cap \mathcal{C}$ is uncountable. So, for each $\beta \in \mathcal{S} \cap \mathcal{C}$, we have that $\Delta(\beta) \notin F(\gamma \restriction \beta)$ which means that there is an infinite amount of $n \in \omega$ such that for all $t \in 2^n$ we have that $D(\beta)(t) = \pi_1(\Delta(\beta))(t) = H(t)$ and, by (v), the set $C_\beta \cap (G_t \cap \beta)$ is infinite, but the set $G_t$ is exactly the set of  $\alpha\in A_t$ such that $H(t) = s(\alpha)$, so the conclusion follows.
\end{proof}

Combining the above result with Theorems \ref{l0},\ref{l1}, and Lemma \ref{lemmaweightfinalspaces} we get the main theorem of our work:
\begin{theorem}\label{maintheorem}%
    Under $\diamondsuit (\non{\M})$ there is a Radon-Nikod\'{y}m compact space of weight $\aleph_1$, and a continuous function whose image is not Radon-Nikod\'{y}m.
\end{theorem}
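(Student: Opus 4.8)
The plan is simply to assemble the theorem from the ingredients already collected in Sections 2 and 3. First, apply Theorem \ref{smallbasic}: under $\diamondsuit(\non{\M})$ we obtain a basic* space $K = A \cup B \cup \{c\}$ with $|K| = \aleph_1$, together with a function $D : B \to \cantor$ witnessing condition (\ref{basic}). In particular $K$ is pre-basic, so Lemma \ref{lemmaweightK} gives $w(K) = |K| = \aleph_1$.

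Next, feed this pair $(K,D)$ into the recipe of Section 2: build $L = (A \times 2^\omega) \cup B \cup \{c\}$ with the basis displayed there, then the compacta $\LL_0 \subseteq L \times (2^\omega)^B$ and $\LL_1 \subseteq L \times [0,1]^B$, and let $\pi : \LL_0 \to \LL_1$ be the continuous surjection acting by $q$ on each $B$-coordinate. By Lemmas \ref{lemmaweightL} and \ref{lemmaweightfinalspaces}, these are zero-dimensional compact spaces with $w(\LL_0) = w(\LL_1) = |K| = \aleph_1$. Theorem \ref{l0}, which needs only that $K$ is pre-basic and $D$ is arbitrary, yields that $\LL_0$ is Radon-Nikod\'{y}m; Theorem \ref{l1}, which uses that $K$ is basic* with $D$ witnessing (\ref{basic}), yields that $\LL_1$ is not Radon-Nikod\'{y}m. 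Since $\LL_1 = \pi[\LL_0]$ is a continuous image of $\LL_0$, the pair $(\LL_0,\pi)$ witnesses the statement.

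At this level there is essentially no obstacle: the argument is a bookkeeping exercise, citing Theorems \ref{smallbasic}, \ref{l0}, \ref{l1} and Lemma \ref{lemmaweightfinalspaces} in the right order. The genuine content lies upstream — chiefly in Theorem \ref{smallbasic}, where $\diamondsuit(\non{\M})$ is used to produce the almost-disjoint family $\langle C_\beta : \beta \in B\rangle$ and the function $D$ so that the Borel trap $F$ catches every candidate $s : A \to \cantortree$ on a stationary set — and in the verification that the weakened condition (\ref{basic}), which replaces the onto function $\Psi$ of \cite{AvilesKoszmider} (and thereby removes the forced continuum hypothesis), is still strong enough for the telescoping via the triangle inequality in the proof of Theorem \ref{l1} to collapse $\delta(b \oplus \vec{0}, b \oplus \vec{1})$ to $0$. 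Both of these points are exactly what the preceding sections establish, so nothing further is needed.
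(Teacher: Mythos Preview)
Your proposal is correct and mirrors the paper's own treatment exactly: the paper states Theorem~\ref{maintheorem} as an immediate corollary of Theorem~\ref{smallbasic} combined with Theorems~\ref{l0}, \ref{l1} and Lemma~\ref{lemmaweightfinalspaces}, which is precisely the assembly you carry out. One harmless slip: $\LL_1$ is not zero-dimensional (it contains the arc $\{b\oplus\xi:\xi\in[0,1]\}$), but you never use this, so the argument stands.
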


There are several models where $\diamondsuit (\non{\M})$ holds and the continuum is large. Adding $\kappa$ Cohen reals for $\kappa$ of uncountable cofinality yields such model. Other well-known models where $\diamondsuit (\non{\M})$ holds are Sacks', Miller's and Silver's model. In fact, in several \emph{canonical} models where $\non{\mathcal{M}} = \aleph_1$, the principle $\diamondsuit (\non{\M})$ also holds. The reader can consult \cite{ParametrizedDiamonds} to see in which canonical models this principle holds.

We would like to finish this work by looking at the limitations of our construction. First, we will need a characterization of $\non{\mathcal{M}}$ due to Bartoszy\'{n}ski.

\begin{theorem}[\cite{Bart}]
  $\non{\M}=\min\{|\A|: \A\subseteq \omega^\omega, \forall f\in \omega^\omega\, \exists g \in \A\, \exists^\infty n\; g(n)=f(n)\}.$  
\end{theorem}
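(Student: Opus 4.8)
The plan is to prove both inequalities between $\non{\M}$ and the cardinal on the right-hand side, which I will denote by $\mathfrak{x}$; from the relational-systems point of view used above this amounts to producing Galois--Tukey morphisms in both directions between the system $\mathbf{M}$ and the ``infinitely often equal'' system $\langle \baire, \baire, =^{\infty}\rangle$, where $f =^{\infty} g$ abbreviates $\exists^{\infty} n\ f(n) = g(n)$.

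For the easy inequality $\mathfrak{x} \le \non{\M}$ I would observe that every non-meager set is already a witnessing family. Fix $f \in \baire$. The set $E_f = \{x \in \baire : \forall^{\infty} n\ x(n) \ne f(n)\}$ equals $\bigcup_{m \in \omega}\{x : \forall n \ge m\ x(n) \ne f(n)\}$, and each set in this union is closed and nowhere dense, since any finite condition can be extended by one more coordinate on which it agrees with $f$; hence $E_f$ is meager. Thus a non-meager set $X$ is not contained in $E_f$, i.e.\ it contains some $x$ with $x =^{\infty} f$. As $f$ was arbitrary, a non-meager set of size $\non{\M}$ is a family of the required kind, so $\mathfrak{x} \le \non{\M}$.

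The reverse inequality $\non{\M} \le \mathfrak{x}$ is the substantial half, and for it I would invoke Bartoszy\'{n}ski's combinatorial description of the meager ideal: every meager subset of $\baire$ is contained in a set of the form $M_{\Pi,y} = \{x : \forall^{\infty} k\ x\restriction I_k \ne y\restriction I_k\}$, where $\Pi = \langle I_k : k \in \omega\rangle$ is a partition of $\omega$ into consecutive finite intervals and $y \in \baire$; consequently $X \subseteq \baire$ is non-meager exactly when for every such $\Pi$ and $y$ there is $x \in X$ agreeing with $y$ on infinitely many blocks $I_k$. Starting from a family $\A$ witnessing $\mathfrak{x}$, the idea is to ``decode'' each $a \in \A$ into a real $x_a$ and check that $\{x_a : a \in \A\}$ is non-meager: one codes a candidate block-pattern $(\Pi,y)$ as a single target $t \in \baire$, uses the defining property of $\A$ to obtain $a \in \A$ with $a(k) = t(k)$ for infinitely many $k$, and reads off from those coordinates that $x_a$ agrees with $y$ on infinitely many blocks.

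The main obstacle is precisely the bookkeeping in this last step: the interval partition $\Pi$ chosen by the ``adversary'' need not be aligned with whatever partition is built into the decoding $a \mapsto x_a$, so a single correct coordinate of $a$ need not produce agreement on a complete $\Pi$-block. I expect to resolve this the standard way, by passing to the reformulation of $\non{\M}$ (and of $\mathfrak{x}$) via ``chopped reals'' and the matching relation --- equivalently, by first reducing to an IOE-covering family inside a product $\prod_{n}\ell_n$ of finite sets with $\ell_n \to \infty$ and using a coarsening argument to check that the associated meager sets remain cofinal in $\M$ --- a formalism set up so that one side always carries a fixed partition while the other varies, which is exactly what makes ``matching'' literally coincide with ``$=^{\infty}$''. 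Once that correspondence is in place the decoding above goes through and produces a non-meager set of size at most $|\A| = \mathfrak{x}$, finishing the proof.
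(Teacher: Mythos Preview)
The paper does not prove this theorem at all: it is quoted with attribution to Bartoszy\'{n}ski via the citation \cite{Bart} and then used as a black box in the short proposition that follows. So there is no ``paper's own proof'' to compare your sketch against.

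That said, a brief comment on your sketch on its own merits. The inequality $\mathfrak{x} \le \non{\M}$ is handled completely and correctly. For $\non{\M} \le \mathfrak{x}$ your outline is the standard one and is sound in spirit, but as written it is only a plan: you invoke the chopped-real description of $\M$ and announce that a coarsening/matching argument will align the adversary's partition with your fixed one, without actually carrying it out. If you want a self-contained proof you will need to make that step explicit --- e.g.\ fix a fast-growing sequence $\langle n_k\rangle$, code elements of $\prod_k \omega^{[n_k,n_{k+1})}$ as elements of $\baire$, and show that an IOE-covering family for this product yields, after decoding, a set not contained in any $M_{\Pi,y}$, using that any interval partition $\Pi$ is eventually swallowed by the fixed blocks $[n_k,n_{k+1})$. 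As it stands the proposal is a correct roadmap rather than a proof; the paper, for its part, simply cites the result.
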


Using this, it is easy to show the following result, presenting a limitation of this method.

\begin{proposition}
    If $K$ is a basic* space, then $w(K) \geq \non{\M}.$
\end{proposition}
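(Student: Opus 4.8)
The plan is to extract, from a basic* space $K$ of weight $\kappa<\non{\M}$, a family $\A\subseteq\omega^\omega$ of size $\kappa$ that is \emph{not} predicting in Bartoszy\'{n}ski's sense: that is, to exhibit a single $f\in\omega^\omega$ such that every $g\in\A$ agrees with $f$ only finitely often, contradicting the characterization $\non{\M}=\min\{|\A|:\forall f\,\exists g\in\A\,\exists^\infty n\,g(n)=f(n)\}$. The candidate family $\A$ will be built directly from the function $D:B\to\cantor$ and the almost-disjoint sets $\langle C_b:b\in B\rangle$ witnessing condition (\ref{basic}); since $|B|\le w(K)=|K|$ by Lemma \ref{lemmaweightK}, we get $|\A|\le w(K)$.

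First I would fix, for each $b\in B$, an enumeration of the countable set $C_b$ and, using it, encode the pair $(D(b),\text{restriction of }C_b)$ as a single real, so that ``$s:A\to\cantortree$ is handled by $b$'' — meaning there are infinitely many $n$ with $\{a\in A_t\cap C_b:D(b)(t)=s(a)\}$ infinite for all $t\in 2^n$ — becomes an infinitary agreement statement between a real coding $s$ and a real $g_b$ coding $b$. The cleanest route is to reduce to a single tree, say by working level by level in $\cantortree$: restricted to level $n$, a function $s$ induces the finite choice $\langle s(a):a\in A_t,t\in 2^n\rangle$, and condition (\ref{basic}) says that for \emph{every} $s$ there is $b$ agreeing with it infinitely often in a suitably coded way. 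Negating the conclusion $w(K)\ge\non{\M}$ means $|B|<\non{\M}$, so the family of reals $\{g_b:b\in B\}$ has size $<\non{\M}$ and hence, by Bartoszy\'{n}ski, there is one real $f$ that escapes all of them (agrees with each only finitely often); decoding $f$ back into a function $s_f:A\to\cantortree$ then yields an $s$ that no $b$ can handle, contradicting (\ref{basic}).

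The main obstacle is the bookkeeping in the reduction: condition (\ref{basic}) is not literally ``$g_b(n)=s(n)$ for infinitely many $n$'' but a more layered requirement (for infinitely many levels $n$, and for \emph{all} $t\in 2^n$ simultaneously, an \emph{infinite} subset of $A_t\cap C_b$ must receive a prescribed value). One must choose the encoding so that a single coordinate of $f$ controls an entire level $n$ at once, and so that ``$D(b)(t)=s(a)$ for infinitely many $a\in A_t\cap C_b$'' is forced by a single value-match rather than requiring genuinely infinitely many; this can be arranged by letting $s$ be constant on each $A_t$ (we have freedom in choosing $s$), which collapses the ``infinitely many $a$'' clause to ``$D(b)(t)=$ that constant value'', and then packaging the finitely many values $\langle s\restriction A_t:t\in 2^n\rangle$ together with the finite chunk of $D(b)$ on level $n$ into matching natural numbers. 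Once the encoding is set up so that ``$b$ handles $s$'' is exactly ``$g_b(n)=f_s(n)$ for infinitely many $n$'', Bartoszy\'{n}ski's theorem applied to $\{g_b:b\in B\}$ finishes the argument immediately; verifying that the decoding map from reals to constant-on-$A_t$ functions $s$ is onto (so that the escaping real really produces a counterexample to (\ref{basic})) is the last routine check.
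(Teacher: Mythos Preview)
Your proposal is correct and takes essentially the same approach as the paper: make $s$ constant on each $A_t$ so that condition (\ref{basic}) collapses to ``$D(b)(t)=f(t)$ for all $t\in 2^n$, for infinitely many $n$,'' then re-index Bartoszy\'{n}ski's characterization level-by-level so that the image $D[B]\subseteq\cantor$ is itself a witnessing family for $\non{\M}$. The paper does this directly rather than by contradiction, and never needs to encode the sets $C_b$ at all---your own observation that the constant-$s$ trick makes the $C_b$ irrelevant is exactly the simplification the paper exploits from the start.
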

\begin{proof}
    Let $D$ be a function witnessing condition (\ref{basic}). Notice that by renaming $\baire$, we can easily see that $\non{\M}=\min\{|\A|: \A\subseteq \cantor, \forall f\in \cantor\, \exists g\in \A\, \exists^\infty n \, \forall t\in 2^n \; g(t)=f(t)\}$. If $f \in \cantor$, we define $s_f:A\to \cantortree$ by $s_f(a)=f(t)$ whenever $a\in A_t$. For some $b\in B$ we have that $D(b)(t)=f(t)$ for each $t\in 2^n$ for infinitely many $n$, i.e., the image of $D$ is a family satisfying the conditions for $\non{\mathcal{M}}$. The conclusion follows from this and from Lemma \ref{lemmaweightK}.
\end{proof}

This shows that our construction has the smallest weight possible, so if one wants to construct one of these spaces in a similar way, it would be necessary to change the condition (\ref{basic}) from the definition of basic* space.

Recall Avil\'{e}s' result from \cite{AvilesRNUnderB}, that all quasi Radon-Nikod\'{y}m spaces are Radon-Nikod\'{y}m. Our spaces have weight $\non{\mathcal{M}}$ and it is known that $\mathfrak{b} \leq \non{\mathcal{M}}$ (see \cite{BlassCardinalCharacteristics} for a proof), which leads to the following question.

\begin{question}
    Is it consistent to have $\non{\mathcal{M}} > \aleph_1$, but have a a Radon-Nikod\'{y}m compact space of weight $\aleph_1$ with a continuous image that is not Radon-Nikod\'{y}m?
\end{question}

One approach to a possible answer to this question would be to, in some way, modify the condition (\ref{basic}), to allow such space to be constructed using $\diamondsuit(\mathfrak{b})$ ($\diamondsuit(\mathfrak{b})$ is the principle $\diamondsuit(\mathbf{B})$, where $\mathbf{B}$ was defined in the introduction).

\begin{acknowledgements}
We would like to thank Piotr Koszmider for introducing us to the problem, and for sharing his valuable ideas related to it. We would also like to thank the participants of the joint topology and set theory seminar from Uniwersytet Wroc\l{}awski and from the Wroc\l{}aw University of Science and Technology for many hours of stimulating conversations.
\end{acknowledgements}

%There is a function $f:B\to \tree^{\tree}$ such that for every $D:A\to \D$, we can find $x\in B$ for which the set of $t$'s satisfying
%        \[\{a\in A_t\cap C_x : f(x)(t)\in D(a)\}\text{ is infinite}\]
%        contains infinitely many full levels of $\tree$ (i.e. for infinitely many $n$'s the above is true for each $t\in 2^n$).

%\begin{dfn}
 %   A set $D\subseteq \tree$ is \textbf{dense} if for any $t\in \tree$ there is $s\in D$ with $t\sqsubseteq s$. $D$ is \textbf{open} if it is closed under $\sqsubseteq$. By $\D$ we denote the collection of all open dense subsets of $\tree$.
%\end{dfn}
\bibliographystyle{alphadin} 
\bibliography{main}

\end{document}